\DeclareMathOperator{\Hom}{Hom}
\DeclareMathOperator{\Ima}{Im}
\DeclareMathOperator{\Ker}{Ker}
\DeclareMathOperator{\Aut}{Aut}
\DeclareMathOperator{\End}{End}
\DeclareMathOperator{\Der}{Der}
\DeclareMathOperator{\Prim}{Prim}
\DeclareMathOperator{\ad}{ad}
\DeclareMathOperator{\Ad}{Ad}
\DeclareMathOperator{\Diff}{Diff}
\DeclareMathOperator{\sk}{sk}
\DeclareMathOperator{\gr}{gr}
\DeclareMathOperator{\MC}{MC}
\DeclareMathOperator{\QAut}{QAut}
\def\z{\mathbb{Z}}
\def\r{\mathbb{R}}
\def\c{\mathbb{C}}
\def\Sp{\mathrm{Sp}}
\def\Sh{\mathrm{Sh}}
\def\A{\mathcal{A}}
\def\B{\mathcal{B}}
\def\D{\mathcal{D}}
\def\E{\mathcal{E}}
\def\F{\mathcal{F}}
\def\G{\mathcal{G}}
\def\H{\mathcal{H}}
\def\M{\mathcal{M}}
\def\Q{\mathcal{Q}}
\def\X{\mathcal{X}}
\def\QA{\mathcal{QA}}
\def\CD{\mathcal{CD}}
\def\m{\mathfrak{m}}
\def\o{\mathfrak{o}}
\def\id{\mathrm{id}}
\def\pt{\partial}
\def\Ab{\mathrm{Ab}}
\def\ct{\hat{L}W}
\def\sus{\mathbf{s}}
\theoremstyle{definition}
  \newtheorem{thm}{Theorem}[section]
  \newtheorem{lem}[thm]{Lemma}
  \newtheorem{prop}[thm]{Proposition}
\theoremstyle{definition}
  \newtheorem{defi}[thm]{Definition}
  \newtheorem{rem}[thm]{Remark}
\title{Obstruction of $C_\infty$-algebra models and characteristic classes}
\author{Takahiro Matsuyuki}
\address{Department of Mathematics, 
Tokyo Institute of Technology, 
2-12-1 Oh-okayama, Meguro-ku, Tokyo 152-8551, Japan.}
\email{matsuyuki.t.aa@m.titech.ac.jp}
\begin{document}
\maketitle
\begin{abstract}
In this paper, we consider an obstruction-theoretical construction of characteristic classes of fiber bundles by simplicial method. We can get a certain obstruction class for a deformation of $C_\infty$-algebra models of fibers and a characteristic map from the exterior algebra of a vector space of derivations. Applying this construction for a surface bundle, we obtain the Euler class of a sphere bundle and the Morita-Miller-Mumford classes of a bundle with positive genus fiber.
\end{abstract}

\section{Introduction}

Our purpose of the paper is to construct characteristic classes of a smooth fiber bundle $X\to E\to B$ by obstruction theory for a certain simplicial bundle $Q_\bullet(X)\to \Q_\bullet(E)\to S_\bullet(B)$ obtained from the original bundle as follows.
\begin{itemize}
\item The $n$-th base set $S_n(B)$ is the set of singular simplices $\Delta^n\to B$, 
\item The $n$-th fiber set $\Q_n(E)_\sigma$ over an $n$-simplex $\sigma\in S_n(B)$ is the set of Chen's formal homology connections \cite{C1,C2} on $\sigma^*E$. The total simplicial set $\Q_\bullet(E)$ is defined by the disjoint sum
\[\Q_n(E)=\coprod_{\sigma\in S_n(B)}\Q_n(E)_\sigma.\]
\item The $n$-th typical fiber $Q_n(X)$ is the set of Chen's formal homology connections $(\omega,\delta)$ on $X\times\Delta^n$. It has the decomposition with respect to differentials of formal homology connections
\[Q_\bullet(X)=\coprod_\delta Q_\bullet(X,\delta).\]
\end{itemize}
A formal homology connection is well-known as a main tool of the de Rham homotopy theory. It has homotopical information of $X$, which is equivalent to a minimal $C_\infty$-algebra model $f:(H,m) \to A$ (\cite{GLS}). Here $A$ is the de Rham complex of $X$, and $H$ is the de Rham cohomology of $X$. 

To apply the obstruction theory to the simplicial bundle, we need to calculate the homotopy group of $Q_\bullet(X)$. This simplicial set is very close to the Maurer-Cartan simplicial set $\MC_\bullet(\ct\otimes A)$ of the DGL $\ct\otimes A$, where $(\ct,\delta)$ is the dual of the bar-construction of the $C_\infty$-algebra $(H,m)$. The homotopy group of the Maurer-Cartan simplicial set is known in \cite{Get, B1, BFMT}. Using the results, we shall prove the homotopy groups of $Q_\bullet(X)$ are described as vector spaces by
\[\pi_n(Q_\bullet(X),\tau)\simeq H_n(\Der(\ct),\ad(\delta))=:\H_n(\delta)\]
for a formal homology connection $\tau=(\omega,\delta)$ on $X$ and $n\geq 1$. The set $\pi_0(Q_\bullet(X,\delta))$ of connected components can also be identified with a certain subspace $\H^{(1)}_0(\delta)$ of the $0$-th homology $\H_0(\delta)$ of the DGL $(\Der(\ct),\ad(\delta))$. These calculations of the homotopy groups of $Q_\bullet(X)$ are shown in Section \ref{FHC}.

If $Q_\bullet(X)$ is $(n-1)$-connected, under certain conditions, we can construct the obstruction class of existence of a partial section over the $n$-skeleton of $\Q_\bullet(E)\to S_\bullet(B)$ 
\[\o_n\in H^{n+1}(B;\Pi_n),\]
where $\Pi_n$ is the local system of the $n$-th homotopy groups of fibers of $\Q_\bullet(E)\to S_\bullet(B)$. Then, by contracting coefficients of $\o_n$, we can also the characteristic map
\[(\Lambda^p\H_n(\delta)^*)^G\to H^{p(n+1)}(B;\r)\]
for any $p\geq1$. Here $G$ is the structure group of $E\to B$. For example, the image of characteristic map for the sphere bundle associated to the Hopf fibration is generated by the Euler class of this bundle. Their definitions are written in Section 5.1, and the example is in Section 5.2.

On the other hand, if $Q_\bullet(X)$ is not connected, instead of $\o_n$, there exists $i\geq 1$ and the cohomology class $\o^{(i)}\in H^1(B;\gr_i(\QA^+(E)))$. Here $\QA^+(E)$ is a certain local system of groups with a filtration. The local system $\Pi_0$ of the $0$-th homotopy set of fibers of $\Q_\bullet(E)\to S_\bullet(B)$ has a structure of $\QA^+(E)$-torsor, i.e., a free and transitive action of $\QA^+(E)$. So we can also construct  the characteristic map
\[(\Lambda^\bullet\gr_i(\H_0(\delta))^*)^G\to H^\bullet(B;\r).\]
Applying for a surface bundle, we get the obstruction class $\o^{(1)}$, and it is equal to the 1st twisted Morita-Miller-Mumford class. It means that the characteristic map gives Morita-Miller-Mumford classes. Their definitions are written in Section 5.3, and the application for a surface bundle is in Section 5.4.


The paper is organized as follows. 
\begin{itemize}
\item In Section 2, we define terms used in the paper. 
\item In Section 3, we define the simplicial set of formal homology connections, and calculate its homotopy groups.
\item In Section 4, we describe obstruction theory for general simplicial sets in order to use in Section 5. This section is independent of other sections.
\item In Section 5, we apply the discussions in Section 4 for $\Q_\bullet(E)\to S_\bullet(B)$ and get obstruction classes. We also calculate the obstruction classes for specific bundles.
\end{itemize}

{\bf Acknowledgment.} I would like to thank my supervisor Y. Terashima for many helpful comments. This work was supported by Grant-in-Aid for JSPS Research Fellow (No.17J01757).

\section{Preliminary}

In this paper, all vector spaces are over the real number field $\r$. The standard $n$-simplex is described by
\[\Delta^n=\left\{(t_i)_{i=0}^n\in \r^{n+1};\sum_{i=0}^nt_i=1\right\}.\]
Fix its base point $\{(1,0,\dots,0)\}=\delta^n\cdots \delta^1(\Delta^n)$, where $\delta^i:\Delta^{n-1}\to \Delta^n$ is the $i$-th coface operator.

Throughout the paper, we consider a smooth fiber bundle $X\to E\to B$ whose fiber $X$ is a manifold with a base point, i.e., a smooth fiber bundle with a smooth section $B\to E$. We always suppose that 
\begin{itemize}
\item a manifold $X$ is connected,
\item its base point $*$ is fixed, and
\item its rational homology group is finite-dimensional. 
\end{itemize}

The structure group of the bundle, which is a subgroup of the diffeomorphism group $\Diff(X)$, acts on the homology group of $X$. We call its image $G$ in the automorphism group of $H_\bullet(X;\r)$ the \textbf{homological structure group}.

\subsection{Graded vector space}
Let $V$ be a $\z$-graded vector space. We denote $V^i$ the subspace of elements of $V$ of \textbf{cohomological degree} $i$ and $V_i=V^{-i}$ the subspace of elements of \textbf{homological degree} $i$. Remark that the \textbf{linear dual} $V^*=\Hom(V,\r)$ of $V$ is graded by $(V^*)^i=\Hom(V_i,\r)$. For an element $v\in V$, its homological degree is denoted by $|v|$.

The \textbf{$p$-fold suspension} $V[p]$ of $V$ for an integer $p$ is defined by 
\[V[p]^i:=V^{i+p}.\]
In the case of $p=1$, elements of $V[1]^i$ are written by $\sus x$ for $x\in V^{i+1}$ using the symbol $\sus$ of cohomological degree $-1$. The symbol $\sus$ is also regarded as the map $\sus:V\to V[1]$ with cohomological degree $-1$. We often use the inverse map $\sus^{-1}:V[1]\to V$ of the suspension $\sus$ too.

\subsection{FDGL}\label{FDGL}
In Section \ref{FLA} and \ref{der}, we use the following structures:
\begin{defi}
Let $L$ be a $\z$-graded Lie algebra and $\F=\{\F^{(i)}\}_{i=0}^\infty$ a decreasing filtration of $L$. The pair $(L,\F)$ is called a \textbf{$\z$-graded filtered Lie algebra}, \textbf{FGL} for short, if it satisfies $[\F^{(i)},\F^{(j)}]\subset \F^{(i+j)}$ for integers $i,j\geq 0$. Moreover, given the differential $\delta$ on $L$ with homological degree $-1$, the triple $(L,\delta,\F)$ is called a \textbf{filtered DGL}, \textbf{FDGL} for short, if it satisfies $\delta(\F^{(i)})\subset \F^{(i+1)}$ for integers $i\geq 0$. Then the homology $H_\bullet(L,\delta)$ has the canonical FGL structure whose filtration $\bar{\F}=\{\bar{\F}^{(i)}\}_{i=0}^\infty$ is defined by 
\[\bar{\F}^{(i)}:=\Ima(\Ker(\delta)\cap \F^{(i)}\to H_\bullet(L,\delta)).\]
\end{defi}
Then the Lie algebra of derivations on a FGL (resp. FDGL) is a FGL (resp. FDGL) as follows:
\begin{defi}\label{homologyFDGL}
Let $(L,\F)$ is a FGL. Put
\[\Der(L)_n:=\{D\in \End(L);D([x,y])=[D(x),y]+(-1)^{|x|n}[x,D(y)],\ D(L_p)\subset L_{p+n}\},\]
\[\Der(L):=\bigoplus_n\Der(L)_n,\]
\[\D=\{\D^{(i)}\}_{i\geq0},\quad\D^{(i)}:=\{D\in \Der(L);D(\F^{(q)})\subset \F^{(q+i)}\}.\]
Then the space $\Der(L)$ of derivations on $L$ is a $\z$-graded Lie subalgebra of $\End(L)$, and $(\Der(L),\D)$ is a FGL. If $(L,\delta,\F)$ is a FDGL, then so is $(\Der(L),\ad(\delta),\D)$.
\end{defi}

\subsection{Free Lie algebra}\label{FLA}
Let $W$ be a $\z$-graded vector space. In this paper, $W$ is always homologically non-negatively graded. 

The graded free Lie algebra $LW$ and the completed free Lie algebra $\ct$ generated by $W$ have the canonical FGL structures as follows:
\begin{itemize}
\item (grading) These two Lie algebras $LW$ and $\hat{L}W$ can be defined as the primitive part of the tensor algebra $TW$ and the completed tensor algebra $\hat{TW}$:
\[LW=\Prim TW=\{x\in TW;\Delta(x)=1\otimes \Delta(x)+\Delta(x)\otimes 1\},\]
\[\ct=\Prim \hat{T}W=\{x\in \hat{T}W;\Delta(x)=1\otimes \Delta(x)+\Delta(x)\otimes 1\},\]
where $\Delta$ is the (completed) coproduct. Since the algebra $TW$ is $\z$-graded by
\[(TW)_n:=\bigoplus_{p\geq 0}\bigoplus_{i_1+\cdots+i_p=n}(W_{i_1}\otimes\cdots \otimes W_{i_p}),\]
so is the Lie algebra $TW$:
\[(LW)_n:=LW\cap (TW)_n.\]
The Lie algebra $\ct\subset \hat{T}W$ is $\z$-graded in the same way.
\item (filtration) Let $\Gamma=\{\Gamma_n\}_{n=1}^\infty$ be the lower central series of $LW$ and $\hat{\Gamma}=\{\hat{\Gamma}_n\}_{n=1}^\infty$ the completed lower central series of $\ct$. It is described by
\[\Gamma_n=LW\cap \bigoplus_{m>n}  W^{\otimes m},\quad\hat{\Gamma}_n=\ct\cap \prod_{m>n}  W^{\otimes m}.\]
Note that
\[\hat{\Gamma}_n=\varprojlim_k\Gamma_n/\Gamma_{n+k},\quad\hat{L}W=\varprojlim_kLW/\Gamma_{k+1}.\]
\end{itemize}
Then $(LW,\Gamma)$ and $(\ct,\hat{\Gamma})$ are FGLs.

\subsection{Derivations}\label{der}
Fix a differential $\delta$ on $\ct$ satisfying $|\delta|=-1$ and $\delta(W)\subset \hat{\Gamma}_2$. Then, from Section \ref{FDGL}, the triples $(\ct,\delta,\hat{\Gamma})$ and $(\Der(\ct),\ad(\delta),\D)$ are FDGLs. Its homology $\H_\bullet(\delta):=H_\bullet(\Der(\ct),\ad(\delta))$ has the induced filtration $\bar{\D}$ as in Definition \ref{homologyFDGL}. Especially, we have the Lie algebra $\H_0(\delta)$ filtered by $\H_0^{(i)}(\delta):=\bar{\D}_0^{(i)}$.

\begin{defi}[exponential map]Consider the group of automorphisms $\Aut(\ct)$ of the completed Lie algebra $\ct$ filtered by
\[\A^{(i)}:=\Ker(\Aut(\ct)\to \Aut(\ct/\hat{\Gamma}_{i+1})).\]
Then the bijection $\exp :\D^{(1)}_0\to \A^{(1)}$ preserving their filtrations, which is called the \textbf{exponential map}, defined by
\[\exp (D)=\sum_{n=0}^\infty \frac{D^n}{n!}\in \End(\ct).\]
The map has the inverse map $\log:\A^{(1)}\to\D_0^{(1)}$. The product of the group $\A^{(1)}$ and the Lie bracket of $\D^{(1)}_0$ are related by the Baker-Campbell-Hausdorff formula. We can also consider the group of automorphisms $\Aut(\delta)$ of the completed dgl $(\ct,\delta)$ filtered by $\A^{(i)}(\delta):=\A^{(i)}\cap \Aut(\delta)$ and the Lie algebra
\[\Der(\delta):=\Ker(\ad(\delta):\Der(\ct)\to \Der(\ct)),\]
which is filtered by $\D^{(i)}(\delta):=\D^{(i)}\cap \Der(\delta)$. Then we can get the restriction $\exp:\D^{(1)}_0(\delta)\to \A^{(1)}(\delta)$ of the exponential map $\D^{(1)}_0\to \A^{(1)}$. So we can get the quotient group
\[\QAut(\delta):=\Aut(\delta)/\exp(\ad(\delta)(\Der(\ct)_1)),\]
which is filtered by the image $\QA(\delta)$ of the filtration $\{\A^{(i)}(\delta)\}_{i\geq 0}$, and the induced exponential map $\exp:\H_0^{(1)}(\delta)\to \QA^{(1)}(\delta)$.
 \end{defi}

\begin{defi}
Let $G$ be a group with a decreasing filtration $\{G^{(i)}\}_{i\geq0}$ of normal subgroups satisfying 
\[[G^{(i)},G^{(j)}]\subset G^{(i+j)},\quad G^{(0)}=G.\]
Then $\gr_i(G):=G^{(i)}/G^{(i+1)}$ is an abelian group with respect to the sum induced by the product of $G$, and  
\[\gr(G):=\bigoplus_{i=0}^\infty \gr_i(G)\]
is a Lie algebra with respect to the Lie bracket defined by the commutator of $G$. Similarly, for a filtered Lie algebra $(L,\F)$, we get the Lie algebra 
\[\gr(L):=\bigoplus_{i=0}^\infty \gr_i(L),\quad \gr_i(L):=\F^{(i)}/\F^{(i+1)}.\]

\end{defi}

Using the notations above, the exponential map induces the isomorphism 
\[\gr_i(\H_0(\delta))\simeq \gr_i(\QA(\delta))\]
for $i\geq 1$.

If $\delta(W)\subset [W,W]$, we can define another grading of $\Der(\ct)$ by
\[\Der^i(\ct):=\{D\in \Der(\ct);\ D(W)\subset W^{\otimes(i+1)}\}.\] 
Then $\ad(\delta)$ has the degree $1$ with respect to the grading. So we have the canonical identification 
\[\H_0^i(\delta):=H^i_0(\Der(\ct),\ad(\delta))\simeq \gr_i(\H_0(\delta)),\]
where $i$ is the second grading of $\Der(\ct)$.

\subsection{Formal homology connections}

In this subsection, we shall review the definition of a formal homology connection on a manifold $X$. We denote the de Rham complex on $X$ by $A^\bullet(X)$, the reduced de Rham complex and cohomology by
\[A=\tilde{A}^\bullet(X):=\Ker(A^\bullet(X)\to A^\bullet(*)=\r),\quad H=\tilde{H}^\bullet_{DR}(X)\]
and the suspension of the reduced real homology by $W=\tilde{H}_\bullet(X;\r)[-1]$.

\begin{defi}[Chen \cite{C1, C2}]
A \textbf{formal homology connection} on $X$ is a pair $(\omega,\delta)$ satisfying the following conditions:
\begin{enumerate}
\item an $\ct$-coefficient differential form $\omega\in A\otimes \ct$ with cohomological degree $1$ is described by 
\[\omega=\sum_{k=1}^{\infty}\sum_{i_1,\dots,i_k}\omega_{i_1\cdots i_k}x_{i_1}\cdots x_{i_k},\]
where $x_1,\dots,x_n$ is a homogeneous basis of $W$, such that 
\[\int_{x_p}\omega_p=1.\]
\item  a linear map $\delta:\ct\to \ct$ is a differential with homological degree $-1$ of $\ct$ such that 
\[\delta(W)\subset \hat{\Gamma}_2.\]
\item the form $\omega$ is a Maurer-Cartan element of $(A\otimes \ct,d+\delta)$, i.e., the flatness condition $\delta\omega+d\omega+\frac12 [\omega, \omega]=0$ holds. (Though the sign notation may be different from Chen's original definition, they are equivalent.)
\end{enumerate}
We call such a differential $\delta$ \textbf{Chen differential} of $X$. If $X$ is simply connected, we can replace the free Lie algebra $LW$ and its derivation $\delta:LW\to LW$ with $\ct$ and $\delta:\ct\to\ct$ respectively. 
\end{defi} 

According to Chen \cite{C1, C2}, a Riemannian metric on $X$ defines the canonical formal omology connection on $X$. If $X$ is a formal manifold (e.g. K\"ahler manifold), there exists a Chen differential $\delta$ satisfying $\delta(W)\subset [W,W]$. This differential is corresponding to the usual product of the cohomology $H$ by method of Section \ref{homotopyalg}.

We can pull-back a formal homology connection by a diffeomorphism as follows. So the diffeomorphism group of $X$ acts on the set of formal homology connections on $X$.
\begin{defi}[pull-back of formal homology connections]
Let $\varphi:Y\to X$ be a diffeomorphism preserving base points, and $(\omega,\delta)$ be a formal homology connection on $X$. Then we can define the formal homology connection on $Y$
\[\varphi^*(\omega,\delta):=((\varphi^*\otimes |\varphi|)\omega,|\varphi|\circ \delta\circ |\varphi|^{-1}),\]
where $|\varphi|:\hat{L}W(X)\to\hat{L}W(Y)$ is the induced map by $\varphi$, 
\[W(X)=\tilde{H}_\bullet(X;\r)[-1],\quad W(Y):=\tilde{H}_\bullet(Y;\r)[-1].\]

\end{defi}

\subsection{$C_\infty$-algebras and formal homology connections} \label{homotopyalg}
In the subsection, we shall review the definition of a $C_\infty$-algebra, and mention the relation between a formal homology connection and a $C_\infty$-algebra (\cite{GJ}). The definitions and discussions in the subsection are used in the proofs of Theorem \ref{homotopyQ} and \ref{ThKMT}. 
\begin{defi}[$C_\infty$-algebra]
Let $A$ be a vector space and $m=\{m_i\}_{i=1}^\infty$ be a family of linear maps $m_i:A^{\otimes i}\to A$ with degree $2-i$. The pair $(A,m)$ satisfying the following conditions is called a \textbf{$C_\infty$-algebra}:
\begin{itemize}
\item \textbf{($A_\infty$-relation)}
\[\sum_{k+l=i+1}\sum_{j=0}^{k-1}(-1)^{(j+1)(l+1)}m_k\circ (\id_A^{\otimes j}\otimes m_l\otimes \id_A^{\otimes (i-j-l)})=0\]
for $i\geq1$, and
\item \textbf{(commutativity)}
\[\sum_{\sigma\in \Sh(j,i-j)}\epsilon\cdot m_i(a_{\sigma(1)},\cdots,a_{\sigma(i)})=0\]
for $i>j>0$ and homogeneous elements $a_1,\dots,a_i\in A$, where $\Sh(i,i-j)$ is the set of $(i,i-j)$-shuffles and $\epsilon$ is the Koszul sign.
\end{itemize}
If $m_1=0$, $(A,m)$ is called \textbf{minimal}. If higher products are all zero, i.e. $m_3=m_4=\cdots =0$, $(A,m)$ can be regarded as differential graded commutative algebra (DGcA). 
\end{defi}

\begin{rem}[Bar construction of a $C_\infty$-algebra]
Let $(A,m)$ be a $C_\infty$-algebra and $\sus:A\to A[1]$ be the suspension map. We denote the tensor coalgebra $T^c(A[1])$ generated by $A[1]$ by $BA$. It is a bialgebra by the tensor coproduct $\Delta$ and the shuffle product $\mu$. Defining the suspension of $m_i$ by $\bar{m}_i:=\sus\circ m_i\circ (\sus^{-1})^{\otimes i}$ for all $i\geq 1$, then $\bar{m}_i:A[1]^{\otimes n}\to A[1]$ is degree $1$ and satisfies the commutativity condition. Thus extending the unique coderivation $\mathfrak{m}_i:BA\to BA$ by the co-Leibniz rule $\Delta\circ \m_i=(\m_i\otimes \id+\id\otimes\m_i)\circ\Delta$, then we have the Hopf derivation
\[\m:=\sum_{i=1}^\infty\m_i.\]
Furthermore $\m$ is a degree $1$ codifferential, i.e. $\m^2=0$, from the $A_\infty$-relations of $m$.

\end{rem}

\begin{defi}[$C_\infty$-morphism]
Let $(A,m)$ and $(A',m')$ be two $C_\infty$-algebras and $f=\{f_i\}_{i=1}^\infty$ be a family of linear maps $f_i:A^{\otimes i}\to A'$ with degree $1-i$ satisfying the following conditions:
\begin{itemize}
\item \textbf{($A_\infty$-morphism)}
\[\sum_{\substack{l\geq 1,\\k_1+\cdots+k_l=k}}(-1)^{\sum_{j=1}^lk_j(l-j)+\sum_{\nu<\mu}k_\nu k_\mu}m_l'\circ (f_{k_1}\otimes \cdots\otimes f_{k_l})\]\[=
\sum_{\substack{s+1+t=i,\\ s+l+t=k}}(-1)^{1+k+(s+1)(l+1)}f_i\circ (\id_A^{\otimes s}\otimes m_l\otimes \id_A^{\otimes t})\]
for $k\geq1$, and
\item \textbf{(commutativity)}
\[\sum_{\sigma\in \Sh(j,i-j)}\epsilon\cdot f_i(a_{\sigma(1)},\cdots,a_{\sigma(i)})=0\]
for $i>j>0$ and homogeneous elements $a_1,\dots,a_i\in A$.
\end{itemize}
Then $f$ is called a \textbf{$C_\infty$-morphism}. If $f_1$ is a quasi-isomorphism, $f$ is called a \textbf{$C_\infty$-quasi-morphism}. 
\end{defi}
\begin{defi}
Given $C_\infty$-algebra $(A,m^A)$, a pair $f:(H,m)\to (A,m^A)$ of a $C_\infty$-algebra structure $m$ on the cohomology $H:=H(A,m_1^A)$ and a $C_\infty$-quasi-isomorphism $f$ such that $f_1$ induces the identity map on the cohomology $H$ is called \textbf{$C_\infty$-algebra model} of $A$.
\end{defi}
\begin{rem}[Bar construction of a $C_\infty$-morphism]
Let $f:(A,m)\to (A',m')$ be a $C_\infty$-morphism. Defining the suspension of $f_i$ by $\bar{f}_i:=\sus\circ f_i\circ (\sus^{-1})^{\otimes i}:A[1]^{\otimes i}\to A'[1]$ for all $i\geq 1$, then the degree of $\bar{f}_i$ is $0$. Constructing the coalgebra map $BA\to BA'$
\[\mathfrak{f}:=\sum_{k=1}^\infty\sum_{\substack{i\geq 1,\\k_1+\cdots+k_i=k}}\bar{f}_{k_1}\otimes \cdots\otimes \bar{f}_{k_i}\]
from maps $\bar{f}_n$, we have the equations
\[\mathfrak{f}\circ\m=\m'\circ\mathfrak{f},\quad \mathfrak{f}\circ\mu=\mu\circ(\mathfrak{f}\otimes \mathfrak{f}).\]
So $\mathfrak{f}$ is a differential bialgebra map $(BA,\m)\to (BA',\m')$ between bar constructions. 
\end{rem}

According to \cite{GLS}, a formal homology connection $(\omega,\delta)$ on $X$ is equivalent to a minimal $C_\infty$-algebra model $f:(H,m)\to A$ of $A$. It is verified as follows: put
\[\omega=-\sum_{i_1,\dots,i_k}(-1)^\epsilon \sus^{-1}\bar{f}_n(x^{i_1},\dots,x^{i_k})x_{i_1}\cdots x_{i_k},\]
\[\delta=\m^*,\]
where 
\[\epsilon=|x_{i_1}|(|x_{i_2}|+\cdots+|x_{i_k}|)+\cdots+|x_{i_{k-1}}||x_{i_k}|,\] 
$\bar{f}_n=\sus f_n(\sus^{-1})^{\otimes n}:H[1]^{\otimes n}\to A[1]$, $x^i$ is the dual basis of $x_i$, and $\m$ is the bar-construction of $m$. Then the differential $\delta$ on the dual $(BH)^*=\hat{T}W$ of the bar-construction $BH$ can be restricted on $\ct$ since $\delta$ is a coderivation. So the pair $(\omega,\delta)$ is a formal homology connection on $X$. Conversely we can recover $f:(H,m)\to A$ from $(\omega,\delta)$. Note that the condition that $f$ is an $A_\infty$-morphism corresponds to the flatness. 

For a diffeomorphism $\varphi:Y\to X$, the formal homology connection $\varphi^*(\omega,\delta)$ on $Y$ is corresponding to the $C_\infty$-algebra model \[\varphi^*\circ f\circ (\varphi^*)^{-1}:(H(Y),\varphi^*\circ m\circ (\varphi^*)^{-1})\to A(Y).\] Here $H(Y)$ is the reduced de Rham cohomology of $Y$, and $A(Y)$ is the reduce de Rham complex of $Y$.

\section{The set of formal homology connections}\label{FHC}

In this section, we shall introduce the main tool of this paper, the simplicial bundle $Q_\bullet(X)\to \Q_\bullet(E)\to S_\bullet(B)$ associated to a fiber bundle $X\to E\to B$. 
\begin{itemize}
\item In subsection \ref{QX}, we shall define the Kan complex $Q_\bullet(X)$ and calculate its homotopy group.
\item In subsection \ref{QE}, we shall define the bundle $\Q_\bullet(E)\to S_\bullet(B)$ and give how to fix differentials of fibers.
\end{itemize}
The notions and the theorems in this section is used in Section \ref{mainsection}.

\subsection{The simplicial set of formal homology connections}\label{QX}

Let $X$ be a manifold. The set of formal homology connections on $X$ is denoted by $Q_0(X)$. 

We define the simplicial de Rham DGA $A_\bullet=\{A_n\}_{n=0}^\infty$ on $X$ by
\[A_n:=\tilde{A}^\bullet(X\times \Delta^n).\]
Its face maps and degeneracy maps are induced by the coface maps and the codegeneracy maps of the cosimplicial space $\Delta^\bullet=\{\Delta^n\}_{n=0}^\infty$.

Put $Q_n(X):=Q_0(X\times \Delta^n)$. The face map $Q_n(X)\to Q_{n-1}(X)$ and the degeneracy map $Q_{n}(X)\to Q_{n+1}(X)$ are induced by those of the simplicial DGA $A_\bullet$. Thus the family $Q_\bullet(X)=\{Q_n(X)\}_{n=0}^\infty$ of sets is a simplicial set. Given a Chen differential $\delta$ on $X$, the set of formal homology connections $(\omega,\delta)$ on $X\times \Delta^n$ is denoted by $Q_n(X,\delta)$. Then $Q_\bullet(X,\delta)$ is also a simplicial set. 

We denote the set of Maurer-Cartan elements of $(A_n\otimes\ct,d+\delta)$ by $\MC_n(X,\delta)$. We obtain the simplicial set $\MC_\bullet(X,\delta)$, and then $Q_\bullet(X,\delta)$ is a simplicial subset of $\MC_\bullet(X,\delta)$.

\begin{lem}\label{aboutQ}
For any $n$-th simplicial Maurer-Cartan element $\alpha\in \MC_n(X,\delta)$, if $\pt_i\alpha\in Q_{n-1}(X)$ for some $0\leq i\leq n$, then $\alpha\in Q_n(X,\delta)$.

\end{lem}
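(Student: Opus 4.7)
The only condition separating $Q_n(X,\delta)$ from $\MC_n(X,\delta)$ is the normalization $\int_{x_p}\alpha_p = 1$ for every basis element $x_p$ of $W$, where $\alpha_p\in\tilde{A}^\bullet(X\times\Delta^n)$ denotes the coefficient of the generator $x_p$ in the expansion $\alpha = \sum \alpha_{i_1\cdots i_k} x_{i_1}\cdots x_{i_k}$. All the remaining requirements --- cohomological degree one, the underlying completion $A_n\otimes\ct$, the choice of Chen differential $\delta$, and flatness --- are already built into being a Maurer-Cartan element of $(A_n\otimes\ct,d+\delta)$. So my plan is to verify that the normalization, which the hypothesis guarantees on the face $X\times\Delta^{n-1}$, propagates to the whole of $X\times\Delta^n$.

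First I would show that each $\alpha_p$ is a closed form on $X\times\Delta^n$. The key is to extract the Lie-degree one part of the flatness equation $(d+\delta)\alpha + \frac{1}{2}[\alpha,\alpha]=0$: the bracket $[\alpha,\alpha]$ contributes only in Lie-degrees $\geq 2$, and because $\delta(W)\subset \hat{\Gamma}_2$ the term $\delta\alpha$ likewise contributes only in Lie-degrees $\geq 2$. The Lie-degree one component of the equation therefore reduces to $d\alpha_p = 0$.

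Next I would exploit contractibility of $\Delta^n$. The projection $X\times\Delta^n\to X$ is a homology equivalence, so the same basis $\{x_p\}$ of $W=\tilde{H}_\bullet(X;\r)[-1]$ works for $X\times\Delta^n$, and a fixed cycle $c_p\subset X$ representing the desuspension of $x_p$ gives a representative $c_p\times\{s\}$ of $x_p$ in $X\times\Delta^n$ for every $s\in\Delta^n$. Any two slices $c_p\times\{s\}$ and $c_p\times\{s'\}$ cobound $c_p\times\gamma$ for a path $\gamma\subset\Delta^n$, so closedness of $\alpha_p$ together with Stokes' theorem gives
\[\int_{c_p\times\{s\}}\alpha_p = \int_{c_p\times\{s'\}}\alpha_p\]
for any $s,s'\in\Delta^n$.

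Finally I would feed in the hypothesis. Since face maps preserve the Chen differential, $\pt_i\alpha\in Q_{n-1}(X)$ actually places $\pt_i\alpha$ in $Q_{n-1}(X,\delta)$ and yields $\int_{c_p\times\{t\}}(\pt_i\alpha)_p = 1$ for any $t\in\Delta^{n-1}$. Since $\pt_i\alpha = (\id_X\times\delta^i)^*\alpha$ and pullback commutes with extracting the generator coefficient, this integral equals $\int_{c_p\times\{\delta^i(t)\}}\alpha_p$, which by the previous step equals $\int_{c_p\times\{s\}}\alpha_p$ for every $s\in\Delta^n$. Hence $\int_{x_p}\alpha_p = 1$ on $X\times\Delta^n$, so $\alpha\in Q_n(X,\delta)$. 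The argument is mostly formal; the one step with real content is deducing closedness of $\alpha_p$ from the flatness equation, which relies crucially on the Chen-differential condition $\delta(W)\subset \hat{\Gamma}_2$.
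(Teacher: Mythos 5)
Your proof is correct, and it reaches the same conclusion as the paper by a more explicit, de Rham--level route. The paper's proof is a one-liner in the $C_\infty$ dictionary: $\alpha$ corresponds to a $C_\infty$-map $f\colon H\to A_n$, the face map $\pt_i\colon A_n\to A_{n-1}$ induces the standard identification $H^\bullet(X\times\Delta^n)\simeq H^\bullet(X\times\Delta^{n-1})$, and $H(\pt_i f_1)=\id$ forces $H(f_1)=\id$. Your argument unpacks exactly this: extracting the Lie--word-length-one component of the flatness equation to get $d\alpha_p=0$ is the explicit statement that $f_1$ is a chain map (this is where $\delta(W)\subset\hat\Gamma_2$ enters, as you note), and the Stokes argument over $c_p\times\gamma$ is the concrete form of ``$\pt_i$ is a quasi-isomorphism because $\Delta^n$ is contractible.'' What your version buys is independence from the $C_\infty$/bar-construction formalism of Section 2.7, at the cost of a slightly longer argument; the paper's version buys brevity by outsourcing the work to the equivalence between formal homology connections and $C_\infty$-models. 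One small remark: the normalization should really be read as the full duality $\int_{x_q}\alpha_p=\delta_{pq}$ (equivalently $H(f_1)=\id$, which is what the paper's proof establishes), not just the diagonal entries; your Stokes argument applies verbatim to the integrals $\int_{c_q\times\{s\}}\alpha_p$ for all pairs $(p,q)$, so this is a matter of stating the conclusion in full rather than a gap.
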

\begin{proof}
We can identify $\alpha$ with a $C_\infty$-map $f:H\to A_n$. The face map $\pt_i:A_n\to A_{n-1}$ for any $i$ gives the standard identification by $H^\bullet(X\times \Delta^n)\simeq H^\bullet(X\times \Delta^{n-1})$ and $H(\pt_if_1):H\to H(A_{n-1})$ is the identity map under the assumption. Therefore we have the commutative diagram 
\[\xymatrix{H\ar[r]^-{H(f_1)}\ar[dr]_{\id=H(\pt_if_1)}&H(A_n)\ar[d]^{H(\pt_i)=\id}\ar@{=}[r]&H\ar@{=}[d]\\&H(A_{n-1})\ar@{=}[r]&H}\]
and it leads $H(f_1)=\id:H\to H(A_{n})=H$.
\end{proof}

Since the simplicial set $\MC_\bullet(X,\delta)$ is a Kan complex (proved in Section 4 of \cite{Get}), the following lemma is obtained immediately from Lemma \ref{aboutQ}:
\begin{lem}\label{hoeq}
The simplicial set $Q_\bullet(X)$ is a Kan complex. Furthermore the map  induced by the inclusion\[\pi_0(Q_\bullet(X,\delta))\to \pi_0(\MC_\bullet(X,\delta))\] is injective, and the map \[\pi_n(Q_\bullet(X,\delta),\tau)\to \pi_n(\MC_\bullet(X,\delta),\tau)\] for $\tau\in Q_0(X,\delta)$ and $n\geq1$ is an isomorphism.

\end{lem}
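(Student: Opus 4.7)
The proof rests on two ingredients already in hand: Lemma \ref{aboutQ} and the Kan property of $\MC_\bullet(X,\delta)$ proved in \cite{Get}. The only new subtlety is that $Q_\bullet(X)=\coprod_\delta Q_\bullet(X,\delta)$ splits as a disjoint union indexed by Chen differentials, so I first note that the simplicial structure on $Q_\bullet(X)$ preserves $\delta$. Indeed, the face and degeneracy operators are induced by the cosimplicial structure of $\Delta^\bullet$ and act only on the de Rham factor of $A_n\otimes \ct$, leaving the second slot, and in particular the differential $\delta$, untouched. Consequently, within any connected sub-simplicial-set of $Q_\bullet(X)$ the differential is constant.

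For the Kan property, fix a horn $\Lambda^n_k\to Q_\bullet(X)$. If $n=1$ the horn is a single vertex $\tau$ and we fill it by the degenerate $1$-simplex $s_0\tau$. If $n\geq 2$, any two of the $n$ horn faces share a common $(n-2)$-face, hence a common vertex, so they carry a common $\delta$; the horn is then a horn in $\MC_\bullet(X,\delta)$, which admits a filler $\alpha\in\MC_n(X,\delta)$ by the result of \cite{Get}. At least one face $\partial_i\alpha$ with $i\neq k$ is a horn face, hence an element of $Q_{n-1}(X)$, and Lemma \ref{aboutQ} places $\alpha$ in $Q_n(X,\delta)\subset Q_n(X)$.

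The homotopy-group statements with fixed $\delta$ follow by the same trick. A class in $\pi_0(\MC_\bullet(X,\delta))$ identifying $\tau_0,\tau_1\in Q_0(X,\delta)$ is represented by a $1$-simplex $\alpha\in\MC_1(X,\delta)$ with $\partial_i\alpha\in\{\tau_0,\tau_1\}\subset Q_0(X)$, and Lemma \ref{aboutQ} forces $\alpha\in Q_1(X,\delta)$, giving injectivity of $\pi_0(Q_\bullet(X,\delta))\to\pi_0(\MC_\bullet(X,\delta))$. For $n\geq 1$, a representative of a class in $\pi_n(\MC_\bullet(X,\delta),\tau)$ is an $n$-simplex all of whose boundary faces are the degenerate basepoint $s_0^{n-1}\tau\in Q_{n-1}(X,\delta)$, and Lemma \ref{aboutQ} again lifts it to $Q_n(X,\delta)$, yielding surjectivity. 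Injectivity for $n\geq 1$ is parallel: a homotopy in $\MC_{n+1}(X,\delta)$ between two representatives in $Q_n(X,\delta)$ has at least one face in $Q_n(X,\delta)$, so Lemma \ref{aboutQ} puts the entire homotopy into $Q_{n+1}(X,\delta)$. The one step that requires attention throughout is the stability of $\delta$ under simplicial operators; once that is secured, the lemma is a direct consequence of Lemma \ref{aboutQ}.
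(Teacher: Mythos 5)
Your proposal is correct and follows exactly the route the paper intends: the paper's proof consists of citing the Kan property of $\MC_\bullet(X,\delta)$ from Getzler and declaring the lemma ``immediate from Lemma \ref{aboutQ},'' and your argument is precisely the careful unwinding of that claim (filling horns in $\MC_\bullet(X,\delta)$ and using Lemma \ref{aboutQ} to pull fillers, paths, representatives, and homotopies back into $Q_\bullet(X,\delta)$). Your explicit observation that the simplicial operators preserve $\delta$, so every horn with $n\geq 2$ lies in a single $\MC_\bullet(X,\delta)$, is a worthwhile detail the paper leaves implicit.
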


\begin{thm}\label{homotopyQ}
The homotopy groups of the simplicial set $Q_\bullet(X)$ are described by
\[\pi_n(Q_\bullet(X),\tau)\simeq \H_n(\delta)\]
for $n\geq 1$ and a formal homology connection $\tau=(\omega,\delta)$ on $X$, where $\H_1(\delta)$ is equipped with the Baker-Campbell-Hausdorff product of $H_0(A\otimes \ct)$.

\end{thm}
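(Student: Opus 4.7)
The strategy is a three-step reduction: first replace $Q_\bullet(X)$ by the Maurer--Cartan simplicial set $\MC_\bullet(X,\delta)$ using Lemma~\ref{hoeq}, then invoke the known computation of $\pi_\bullet$ of Maurer--Cartan simplicial sets from \cite{Get,B1,BFMT} to convert the problem into a homology calculation, and finally compare the resulting twisted complex with $(\Der(\ct),\ad(\delta))$ via an evaluation map built from $\omega$.

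For the first two steps, observe that $Q_\bullet(X)=\coprod_\delta Q_\bullet(X,\delta)$, so $\pi_n(Q_\bullet(X),\tau)=\pi_n(Q_\bullet(X,\delta),\tau)$ for $n\geq 1$, and Lemma~\ref{hoeq} identifies this with $\pi_n(\MC_\bullet(X,\delta),\tau)$. The DGL $L=A\otimes\ct$ is pronilpotent with respect to the filtration induced by $\hat{\Gamma}$, so the cited theorem yields
\[
\pi_n(\MC_\bullet(X,\delta),\tau)\ \simeq\ H_{n-1}\bigl(A\otimes\ct,\ d+\delta+[\omega,-]\bigr),
\]
with the Baker--Campbell--Hausdorff product when $n=1$.

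It then remains to establish a natural isomorphism $H_{n-1}(A\otimes\ct,\,d+\delta+[\omega,-])\simeq\H_n(\delta)$. I would produce this via the evaluation map
\[
\mathrm{ev}_\omega:\Der(\ct)_n\longrightarrow (A\otimes\ct)_{n-1},\qquad D\longmapsto (\id_A\otimes D)(\omega),
\]
of homological degree $-1$, matching the shift above. The chain-map identity $(d+\delta+[\omega,-])\mathrm{ev}_\omega(D)=\mathrm{ev}_\omega(\ad(\delta)(D))$ follows from a short calculation using the Leibniz rule for $D$ together with the flatness equation $d\omega+\delta\omega+\tfrac12[\omega,\omega]=0$.

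The main obstacle is showing that $\mathrm{ev}_\omega$ induces an isomorphism on homology. The plan is to filter both complexes by the $\hat{\Gamma}$-filtration on $\ct$ and compare the associated spectral sequences. On $E_0$, the bracket $[\omega,-]$ reduces to multiplication by its linear part $\omega_{\mathrm{lin}}=\sum_i \omega_i x_i$, where $\omega_i\in A$ are closed forms representing the cohomology basis dual to $\{x_i\}$; the induced map on the associated gradeds factors through an elementary Koszul-type complex, and the required iso reduces to the standard fact that pairing with $\omega_{\mathrm{lin}}$ implements the canonical identification $H\simeq H^\bullet_{DR}(X)$. Convergence of both spectral sequences, guaranteed by the pronilpotence of $\hat{\Gamma}$, upgrades the $E_\infty$-isomorphism to a quasi-isomorphism of the original complexes. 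Naturality of all constructions ensures that the BCH structure on $\pi_1$ matches the one on $\H_1(\delta)$, completing the proof.
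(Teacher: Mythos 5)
Your first two reductions --- restricting to $Q_\bullet(X,\delta)$, passing to $\MC_\bullet(X,\delta)$ via Lemma~\ref{hoeq}, and invoking \cite{B1} to obtain $\pi_n\simeq H_{n-1}(A\otimes\ct,\,d+\delta+[\omega,-])$ --- coincide exactly with the paper's. You diverge at the final comparison with $(\Der(\ct),\ad(\delta))$. The paper dualizes to the bar construction: it identifies $A[1]\otimes\ct$ with the complex $\Der_F(BH,BA)$ of Hopf derivations over the bar construction $F$ of the $C_\infty$-model corresponding to $\tau$, and then asserts that $F\circ{-}:\Der(BH)\to\Der_F(BH,BA)$ is a quasi-isomorphism because $F$ is. Your map $\mathrm{ev}_\omega$ is, up to suspension, precisely the composite of those two identifications (composing a coderivation with $F$ and projecting to the lowest term is evaluation against $\omega$), so you are building the same comparison map in coordinates; the genuine difference is how the quasi-isomorphism is justified. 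The paper outsources it to a general statement about post-composition with quasi-isomorphisms of cofree dg coalgebras, whereas you prove it directly by filtering both sides by $\hat{\Gamma}$: on associated gradeds the differentials $\delta$ and $[\omega,-]$ strictly raise filtration, only the linear part $\sum_i\omega_i x_i$ of $\omega$ survives, and the $E_1$-map is the isomorphism $W^*\otimes\gr\ct\to H\otimes\gr\ct$ given by $x^i\mapsto[\omega_i]$. Your route is more self-contained (the paper's one-line assertion in fact hides the same filtration argument); the paper's buys a cleaner conceptual statement. Two points to tighten: the chain-map identity holds only up to a sign $(-1)^{|D|}$ unless you insert a suspension as the paper does with $A[1]\otimes\ct$; and since the $\hat{\Gamma}$-filtration is unbounded and decreasing, ``convergence'' should be replaced by an explicit appeal to completeness of both complexes (acyclicity of the mapping cone via $\varprojlim$ with vanishing $\textstyle\lim^1$), which is where the hypothesis $\delta(W)\subset\hat{\Gamma}_2$ and the completed tensor product are actually used.
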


\begin{proof}
From Proposition 5.4 and Theorem 5.5 in \cite{B1}, we have
\[\pi_n(Q_\bullet(X),\tau)\simeq\pi_n(\MC_\bullet(X,\delta),\tau)\simeq H_{n-1}(A\otimes \ct,d+\delta+[\omega,-]).\]
Let $F:BH\to BA$ be the bar-construction $F:BH\to BA$ of the $C_\infty$-morphism $H\to A$ corresponding to $\tau$. An endomorphism $D:BH\to BA$ is called a Hopf derivation over $F$ if it satisfies
\[D\nabla=\nabla(D\otimes F+F\otimes D),\quad \Delta D=(D\otimes F+F\otimes D)\Delta,\]
where $\nabla$ is the product and $\Delta$ is the coproduct. The vector space of Hopf derivations over $F$ is denoted by $\Der_F(BH,BA)$. A Hopf derivation $D$ over $F$ is uniquely determined by the lowest term $BH\overset{D}\to BA\overset{\text{proj.}}\to A[1]$. So there exists the natural inclusion $\Der_F(BH,BA)\subset \Hom(BH,A[1])$.

We shall prove the suspension of $(A\otimes \ct,d+\delta+[\omega,-])$ and the chain complex $(\Der_F(BH,BA),\mathfrak{D})$ are isomorphic. Here the differential $\mathfrak{D}$ is defined by
\[\mathfrak{D}(D)=\m^A\circ D-(-1)^DD\circ\m,\]
where $\m^A$ and $\m$ are the bar-constructions of the $C_\infty$-algebra structures $m^A$ and $m$ on $A$ and $H$ respectively. Here $m$ is the corresponding $C_\infty$-structure on $H$ to $\delta$.

Through the embedding $\ct\subset \hat{T}W=(BH)^*$, consider the linear isomorphism $\Phi:A[1]\otimes \ct\to \Der_F(BH,BA)\subset \Hom(BH,A[1])$ defined by
\[\Phi(\alpha\otimes f)(x)= f(x)\alpha\]
for $x\in BH$. Here the differential on $A[1]\otimes\ct$ is equal to $\sus(d+\delta+[\omega,-])\sus^{-1}$. Then, using $F=\Phi(\sus\omega)$, we have
\begin{align*}
&\Phi(\sus(d+\delta+[\omega,-])\sus^{-1}(\alpha\otimes f))(x)\\
=&d\alpha f(x)+(-1)^{\alpha+1}\alpha \delta f(x)+\sus[\omega,\sus^{-1}\alpha \otimes f](x)\\
=&d\alpha f(x)-(-1)^{\alpha+f}\alpha f\m(x)+\m_2^A\circ (F\otimes\Phi(\alpha  f))(x)+\m_2^A\circ(\Phi(\alpha f)\otimes F)(x)\\
=&\mathfrak{D}\Phi(\alpha f)(x).
\end{align*}
Thus the map $\Phi$ is a chain isomorphism. 

On the other hand, the map
\[F\circ -:(\Der(\ct),\ad(\delta))=(\Der(BH),\ad(\m))\to (\Der_F(BH,BA),\mathfrak{D})\]
is a quasi-isomorphism because $F$ is a quasi-isomorphism. So we get the isomorphism
\[ H_{n-1}(A\otimes \ct,d+\delta+[\omega,-])\simeq H_n(\Der(\ct),\ad(\delta)).\]
\end{proof}

The next theorem is also proved in \cite{KMT}.
\begin{thm}[Kajiura-M.-Terashima]\label{ThKMT}
The set $\pi_0(Q_\bullet(X,\delta))$ has the standard right action of $\QA^{(1)}(\delta)$, and the action is free and transitive. 

\end{thm}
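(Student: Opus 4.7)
The plan is to define the action on each $Q_n(X,\delta)$ by $(\omega,\delta)\cdot\varphi := ((\id_{A_n}\otimes\varphi)(\omega),\delta)$ for $\varphi\in\A^{(1)}(\delta)$, verify it descends to a $\QA^{(1)}(\delta)$-action on $\pi_0(Q_\bullet(X,\delta))$, and then establish freeness and transitivity. Compatibility with faces and degeneracies is automatic since $\varphi$ is constant along $\Delta^n$; the Maurer--Cartan equation is preserved because $\varphi$ is a DGL automorphism of $(\ct,\delta)$; and the normalization $\int_{x_p}\omega_p=1$ survives because $\varphi\equiv\id$ modulo $\hat\Gamma_2$, so $\varphi$ fixes the part of $\omega$ linear in $W$.

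Next I would show that $\exp(\ad(\delta)(D))$ acts trivially on $\pi_0$ for every $D\in\Der(\ct)_1$. Setting $\omega_t:=(\id\otimes\exp(t\ad(\delta)(D)))\omega$ and $\xi_t:=(\id\otimes D)(\omega_t)\in(A\otimes\ct)_0$, a direct computation using the Maurer--Cartan equation for $\omega_t$ and the graded Leibniz rules for $D$ and $\delta$ yields the gauge-flow identity $\dot\omega_t=(d+\delta)\xi_t+[\omega_t,\xi_t]$. Packaging this into the 1-form
\[\tilde\omega:=\omega_t+dt\wedge\xi_t\in A^\bullet(X\times\Delta^1)\otimes\ct\]
produces a Maurer--Cartan element on $X\times\Delta^1$ whose two faces are $\omega$ and $(\id\otimes\exp(\ad(\delta)(D)))\omega$; both lie in $Q_0(X,\delta)$, so Lemma \ref{aboutQ} places $\tilde\omega$ in $Q_1(X,\delta)$ and the endpoints agree in $\pi_0$. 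Hence the $\A^{(1)}(\delta)$-action descends to a $\QA^{(1)}(\delta)$-action.

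For transitivity and freeness I would pass to the $C_\infty$-algebra picture of Section \ref{homotopyalg}: an element of $Q_0(X,\delta)$ is a $C_\infty$-quasi-isomorphism $f\colon(H,m)\to A$ lifting $\id_H$ on cohomology, and the $\A^{(1)}(\delta)$-action corresponds to precomposition with $C_\infty$-self-morphisms of $(H,m)$ lifting $\id_H$. Given two representatives $(\omega,\delta),(\omega',\delta)$, I would construct $\varphi$ inductively along $\hat\Gamma$ so that $(\id\otimes\varphi)\omega\equiv\omega'$ modulo $\hat\Gamma_{k+1}$ at each stage $k$; at each step the obstruction is represented by a $0$-cycle in $(\Der(\ct),\ad(\delta))$ in the $k$-th graded piece, and because both $f_1$ and $f'_1$ induce $\id$ on $H$ this class can be removed by updating $\varphi$ inside $\A^{(k)}(\delta)$ together with a gauge equivalence of the type built in the second paragraph. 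For freeness, if $(\id\otimes\varphi)\omega\sim\omega$ in $Q_\bullet(X,\delta)$, Lemma \ref{hoeq} translates this to a gauge equivalence in $A\otimes\ct$; tracking the leading filtration term of $\log\varphi$ along that gauge flow shows inductively that it lies in $\ad(\delta)(\Der(\ct)_1)$, so $\varphi\in\exp(\ad(\delta)(\Der(\ct)_1))$.

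The hardest part will be the inductive obstruction analysis for transitivity, which is essentially a reworking of Kadeishvili's uniqueness of minimal $C_\infty$-models in the formal-homology-connection setting. One must simultaneously update $\varphi$ filtration-degree by filtration-degree and represent $\omega'$ suitably within its $\pi_0$-orbit so that the cocycle in $\H_0^{(k)}(\delta)$ appearing at stage $k$ really bounds; the gauge-trivialization of the second paragraph is precisely the mechanism that makes these obstructions vanish at each stage.
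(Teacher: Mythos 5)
Your setup is sound and in fact supplies detail that the paper's proof leaves implicit: the action $(\omega,\delta)\cdot\varphi=((\id\otimes\varphi)\omega,\delta)$ is exactly the coefficient-side version of the paper's ``precomposition of $C_\infty$-models with $C_\infty$-automorphisms of $(H,m)$ lifting $\id_H$,'' and your gauge-flow computation showing that $\exp(\ad(\delta)(D))$ acts trivially on $\pi_0$ (via $\tilde\omega=\omega_t+dt\wedge\xi_t$ and Lemma \ref{aboutQ}) is the explicit form of the identification $\QA^{(1)}(\delta)\simeq\{C_\infty\text{-automorphisms of }(H,m)\text{ with }\varphi_1=\id\}/(C_\infty\text{-homotopy})$ that the paper simply asserts from Section \ref{homotopyalg}. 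Where you diverge is the last step: the paper's entire proof of freeness and transitivity is the single observation that every $C_\infty$-quasi-isomorphism admits a homotopy inverse. Given a homotopy inverse $g$ of $f$, transitivity is immediate ($\varphi:=g\circ f'$ has $\varphi_1=\id$ because $(H,m)$ is minimal, and $f\circ\varphi\simeq f'$), and freeness is formal ($f\circ\varphi\simeq f$ forces $\varphi\simeq g\circ f\circ\varphi\simeq g\circ f\simeq\id_H$). You instead propose to reprove this input by an order-by-order induction along $\hat\Gamma$.

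That induction is where the genuine gap sits. You assert that at stage $k$ the obstruction ``can be removed by updating $\varphi$ inside $\A^{(k)}(\delta)$ together with a gauge equivalence,'' but this is precisely the nontrivial content: the correction $u\in\D^{(k)}_0$ needed to absorb the degree-$(k+1)$ discrepancy between $(\id\otimes\varphi_k)\omega$ and $\omega'$ is produced only as an element of $\D^{(k)}$, and one must show it can be chosen to commute with $\delta$ (equivalently, that the updated $\varphi$ remains a $C_\infty$-morphism of $(H,m)$, not merely a coalgebra automorphism of $BH$), while simultaneously the non-harmonic part of the discrepancy is disposed of by a gauge parameter in $A_0\otimes\gr_{k+1}(\ct)$. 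Neither the closedness of the order-$k$ obstruction under $\ad(\delta)$ nor the exactness argument that kills it is carried out, and the freeness half (``tracking the leading filtration term of $\log\varphi$'') has the same status. None of this is false --- it amounts to reproving the homotopy-inverse theorem for $C_\infty$-quasi-isomorphisms in the formal-homology-connection dictionary --- but as written the hardest claim is named rather than proved. Either carry out that induction in full, or do what the paper does and quote the standard invertibility-up-to-homotopy of $C_\infty$-quasi-isomorphisms, after which the theorem follows in two lines from your paragraphs one and two.
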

\begin{proof}
There are the identifications obtained in Section \ref{homotopyalg}:
\[\pi_0(Q_\bullet(X,\delta))=\{\text{$C_\infty$-algebra model }f:(H,m)\to A\}/(C_\infty\text{-homotopic}),\]
\[\QA^{(1)}(\delta)=\{\text{$C_\infty$-isom }f:(H,m)\to A;f_1=\id_H:H\to H\}/(C_\infty\text{-homotopic}),\]
where $m$ is the minimal $C_\infty$-algebra structure on $H$ corresponding to $\delta$. So $\QA^{(1)}(\delta)$ acts on the right of $\pi_0(Q_\bullet(X,\delta))$ by composition. Since any $C_\infty$-quasi-isomorphism has a homotopy inverse, the action is free and transitive.
\end{proof}

\subsection{The simplicial bundle of formal homology connections}\label{QE}

Let $X\to E\to B$ be a smooth fiber bundle. In the section, we shall define the simplicial bundle of formal homology connections on fibers.

\begin{defi}
We define the simplicial bundle $\Q_\bullet(E)\to S_\bullet(B)$ over the simplicial set $S_\bullet(B)$ of smooth singular simplices $\Delta^n\to B$ as follows:
\begin{itemize}
\item the fiber over an $n$-simplex $\sigma\in S_n(B)$ is $\Q_n(E)_{\sigma}:=Q_0(\sigma^*E)$, and
\item the face maps and the degeneracy maps are the induced maps $\Q_n(E)_{\sigma}\to \Q_{n-1}(E)_{\pt_i\sigma}$ and $\Q_n(E)_{\sigma}\to \Q_{n+1}(E)_{s_i\sigma}$ by the coface maps and the codegeneracy maps of $\Delta^\bullet$ respectively. 
\end{itemize}
\end{defi}

We can check that $\Q_\bullet(E)\to S_\bullet(B)$ is a bundle of simplicial sets in the sense of May \cite{May}. 
\begin{prop}
The simplicial map $\Q_\bullet(E)\to S_\bullet(B)$ is a simplicial bundle with fiber $Q_\bullet(X)$. 
\end{prop}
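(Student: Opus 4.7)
The preceding paragraph asserts that $\Q_\bullet(E) \to S_\bullet(B)$ has the structure of a bundle of simplicial sets in the sense of May, so the remaining content of the proposition is to identify the typical fiber with $Q_\bullet(X)$. My plan is to compute the fiber over any vertex of $S_\bullet(B)$ and check that it is $Q_\bullet(X)$ as a simplicial set.

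Fix a vertex $b \in S_0(B)$, that is, a point of $B$. The fiber of $\Q_\bullet(E) \to S_\bullet(B)$ over $b$ in degree $n$ is $\Q_n(E)_{s_0^n b}$, where $s_0^n b : \Delta^n \to B$ is the constant degenerate simplex at $b$. Since the pullback of $E \to B$ along a constant map is strictly the product of $\Delta^n$ with the set-theoretic fiber, a bundle chart identifying $E_b$ with the typical fiber $X$ gives
\[
(s_0^n b)^* E \cong X \times \Delta^n,
\]
and therefore
\[
\Q_n(E)_{s_0^n b} = Q_0\bigl((s_0^n b)^* E\bigr) \cong Q_0(X \times \Delta^n) = Q_n(X).
\]

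To promote this levelwise bijection to a simplicial isomorphism I would observe that the face and degeneracy operators of $\Q_\bullet(E)$, restricted to the fiber over $b$, are by construction induced from the cosimplicial structure of $\Delta^\bullet$ acting on the de Rham DGA $\tilde{A}^\bullet(X \times \Delta^n)$, which is precisely the simplicial structure used to define $Q_\bullet(X)$. The main (mild) obstacle is the non-canonicity of the bundle chart: two different charts differ by a based diffeomorphism of $X$, but this acts on $Q_\bullet(X)$ by pullback of formal homology connections as reviewed in Section 2 and yields an automorphism of the simplicial set, so the typical fiber is $Q_\bullet(X)$ independently of the chart.
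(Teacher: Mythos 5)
There is a real gap here: you have taken the sentence preceding the proposition (``we can check that $\Q_\bullet(E)\to S_\bullet(B)$ is a bundle of simplicial sets in the sense of May'') as an established fact and reduced the proposition to identifying the typical fiber. But that sentence is an announcement of the proposition, not a prior result; the content of the proposition \emph{is} the local triviality. In May's sense, being a simplicial fiber bundle with fiber $Q_\bullet(X)$ means that for \emph{every} $n$-simplex $\sigma\in S_n(B)$, the pullback $\sigma^*\Q_\bullet(E)\to\Delta[n]$ is isomorphic over $\Delta[n]$ to the trivial bundle $\Delta[n]\times Q_\bullet(X)$. Your argument only computes the fiber over a vertex $b$ (the degenerate simplices over $s_0^n b$), which is a single, and the easiest, instance of this condition; it says nothing about trivializability over nondegenerate simplices of positive dimension, which is where the actual work lies.

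The paper's proof supplies exactly this missing step: for each $\sigma\in S_n(B)$ one chooses a smooth trivialization $\varphi_\sigma:\Delta^n\times X\simeq\sigma^*E$ (available because $\Delta^n$ is contractible), restricts it compatibly along every $P\in\Delta[n]_i$ to get $\varphi_{\sigma,P}:\Delta^i\times X\simeq\sigma(P)^*E$, and then checks that $(P,\alpha)\mapsto(P,\varphi_{\sigma,P}^*\alpha)$ assembles into a simplicial isomorphism $\hat\varphi_\sigma:\sigma^*\Q_\bullet(E)\simeq\Delta[n]\times Q_\bullet(X)$, using the pullback of formal homology connections from Section 2. Your fiber computation over a vertex, and your remark that a change of chart acts through pullback by a based diffeomorphism, are both correct and consistent with this, but to repair the proof you must carry out the trivialization over an arbitrary $n$-simplex, not just over the constant ones.
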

\begin{proof}
For an $n$-simplex $\sigma\in S_n(B)$ and a trivialization $\varphi_\sigma:\Delta^n\times X\simeq \sigma^*E$, we obtain the trivialization $\varphi_{\sigma,P}:\Delta^i\times X\simeq \sigma(P)^*E$ for $P\in \Delta[n]_i$ by the diagram
\[\xymatrix{\Delta^n\times X\ar[r]^{\varphi_\sigma}&\sigma^*E\\\Delta^i\times X\ar[u]^{f_P\times\id_X}\ar[r]^{\varphi_{\sigma,P}}&\sigma(P)^*E\ar[u],}\]
regarding $\sigma$ as a simplicial map $\sigma:\Delta[n]\to S_\bullet(B)$. Here the map $f_P:\Delta^i\to \Delta^n$ is the induced map $P:\Delta[i]\to \Delta[n]$.

Then we obtain the simplicial trivialization
\[\hat{\varphi}_\sigma:\sigma^*\Q_\bullet(E)\simeq \Delta[n]\times Q_\bullet(X).\]
by $(P,\alpha)\mapsto (P,\varphi_{\sigma,P}^*\alpha)$, where
\[\sigma^*\Q_i(E)=\{(P,\alpha)\in \Delta[n]_i\times Q_0(\sigma(P)^*E)\}.\]
\end{proof}

We need to define a section of differentials $\delta$ to restrict the simplicial fiber $Q_\bullet(X)$ to $Q_\bullet(X,\delta)$.
\begin{defi}
Fix a Chen differential $\delta\in \Der(\ct)_{-1}$ of $X$. Suppose $\delta$ is $G$-invariant with respect to the action of the homological structure group $G$ on $\Der(\ct)$ (induced by the action on $W$). Then it gives the section $\hat{\delta}$ of the surjective map between sets
\[\CD(E):=\coprod_{b\in B}\CD(E)_b\to B,\]
where $\CD(E)_b:=\{\text{Chen differential of $E_b$}\}$ for $b\in B$. Explicitly, $\hat{\delta}:B\to \CD(E)$ is described by
\[\hat{\delta}(b)=|\varphi_b|^{-1}\circ \delta\circ |\varphi_b|\in \Der(\hat{L}W(E_b))_{-1},\quad W(E_b):=\tilde{H}_\bullet(E_b;\r)[-1]\]
where $|\varphi_b|:\hat{L}W(E_b)\simeq \hat{L}W$ is the isomorphism induced by a trivialization $\varphi_b:E_b\simeq X$.  We call $\hat{\delta}$ a \textbf{section of Chen differentials}. Given this, we can consider the simplicial bundle $\Q_\bullet(E,\hat{\delta})\to S_\bullet(B)$ defined by
\[\Q_n(E,\hat{\delta})_\sigma:=Q_0(\sigma^*E,\hat{\delta}(\sigma))\]
for $\sigma\in S_n(B)$. Here $\hat{\delta}(\sigma)$ is the Chen differential of $\sigma^*E$ defined by $\hat{\delta}(\sigma_0)$ through the isomorphism $\tilde{H}_\bullet(\sigma^*E)\simeq \tilde{H}_\bullet(E_{\sigma_0})$ on homologies. Here $\sigma_0=\pt_1\cdots \pt_n\sigma$ is the image of the base point of $\Delta^n$.

\end{defi}

For example, if $X$ is formal, the differential $\delta$ corresponding to the cohomology ring structure of $X$ is $\Diff(X)$-invariant.

\section{Obstruction theory}\label{OT}

Obstruction theory for simplicial sets is studied in \cite{BFG, DK}. We shall review a part of them and rewrite briefly obstruction theory as in Steenrod \cite{St} for simplicial sets in order to fit our use. 
\begin{itemize}
\item In Section \ref{localsys}, we define the fundamental groupoid, a local system, and the cochain complex with local coefficient. 
\item In Section \ref{obstruction}, we define obstruction classes to extend a section of a simplicial bundle over the $n$-skeleton for $n\geq 1$.
\item In Section \ref{non-abelian}, we suppose the local system $\Pi_0$ of connected components of fibers has a free and transitive action of a local system $\G$ of filtered groups. Under the assumption, we introduce obstruction classes to extend a section of a simplicial bundle over the $0$-skeleton stepwisely using the filtration of $\G$.
\end{itemize}
We shall apply these constructions to the simplicial bundle $Q_\bullet(X)\to \Q_\bullet(E)\to S_\bullet(B)$ in Section \ref{mainsection}.

\subsection{Local system}\label{localsys}

We shall define cohomology with local coefficients briefly. We can see definitions in \cite{BFG, DK}. 
\begin{defi}
Let $\X$ be a Kan complex. We define the \textbf{fundamental groupoid} $\Pi_1(\X)$ of $\X$ such that the set of objects is $\X_0$ and the set of morphisms from $x$ to $y$ is the set of homotopy classes of $\gamma\in \X_1$ satisfying $\pt_0\gamma=x$ and $\pt_1\gamma=y$. A covariant functor $\Pi_1(\X)\to \Ab$ is called a \textbf{local system} on $\X$. Here $\Ab$ is the category of abelian groups.
\end{defi}

Let $\E\to \B$ be a Kan simplicial bundle with $n$-simple fiber $\X$, i.e., $\X$ is a Kan complex and $\pi_1(\X,x)$ acts on $\pi_n(\X,x)$ trivially.

\begin{defi}
We define the local system $\Pi_n(\E/\B)$ on $\B$ as follows: for a vertex $v\in \B_0$,
\[\Pi_n(\E/\B)_v:=\pi_n(v^*\E).\]
Note that we need not to choose a base point of $v^*\E$ because it is $n$-simple. For a path $\gamma\in \B_1$ such that $v_0=\pt_1\gamma$ and $v_1=\pt_0\gamma$, take a trivialization
\[\varphi_\gamma:\Delta[1]\times v_0^*\E\simeq \gamma^*\E\]
such that 
\[\xymatrix{\Delta[1]\times v_0^*\E\ar[r]^-{\varphi_\gamma}& \gamma^*\E\\v_0^*\E\ar[u]^{\delta^1}\ar@{=}[r]&v_0^*\E\ar[u]_{\text{incl.}}.}\]
Here $\delta^i:\Delta[0]\to \Delta[1]$ is the coface maps. Then we have the isomorphism $g_\gamma:v_0^*\E\to v_1^*\E$, which is called the \textbf{holonomy} along $\gamma$, defined by
\[\xymatrix{\Delta[1]\times v_0^*\E\ar[r]^-{\varphi_\gamma}&\gamma^*\E\\v_0^*\E\ar[u]^{\delta^0}\ar[r]_{g_\gamma}&v_1^*\E\ar[u]_{\text{incl.}}.}\]
So we put
\[\Pi_n(\E/\B)(\gamma):=(g_\gamma^{-1})_*:\pi_n(v_1^*\E)\to \pi_n(v_0^*\E).\]
We can prove that it is depend on only the homotopy class of $\gamma$ since $\E\to \B$ is Kan fibration. In fact, for another path $\gamma'$ homotopic to $\gamma$ by a homotopy $\sigma\in \B_2$, there exists a homotopy $h$ satisfying the commutative diagram
\[\xymatrix{\Lambda^2[2]\times v_0^*\E\ar[r]^-{\varphi_\gamma\cup\varphi_{\gamma'}}\ar[d]&\sigma^*\E\ar[d]\\\Delta[2]\times v_0^*\E\ar@{.>}[ur]^h\ar[r]&\Delta[2]}\]
by Theorem 7.8 in \cite{May}. Here $\Lambda^2[2]$ is the $(2,2)$-horn.
\end{defi}

The cochain complex and the cohomology with local coefficients are defined as follows.

\begin{defi}
Let $\X$ be a Kan complex, $\A$ a simplicial subset of $\X$, and $M:\Pi_1(\X)\to \Ab$ a local system on $\X$. We define \textbf{the cochain complex with coefficient $M$} by 
\[C^n(\X,\A;M):=\left\{c:\X_n\to \coprod_{v\in \X_0}M(v);\ c(x)\in M(x_0),\ c|\A=0\right\},\]
where $x_0=\pt_1\cdots \pt_nx$, and its normalized version by
\[N^n(\X,\A;M):=\bigcap_{i=0}^n\Ker (s_i^*:C^n(\X,\A;M)\to C^{n-1}(\X,\A;M)).\]
The differential $\delta:C^n(\X,\A;M)\to C^{n+1}(\X,\A;M)$ is defined by
\[\delta c(x)=M(x_{01})^{-1}c(\pt_0x)-c(\pt_1x)+\cdots +(-1)^{n+1}c(\pt_{n+1}x),\]
where $x_{01}=\pt_2\cdots \pt_nx$. Its cohomology is denoted by $H^n(\X,\A;M)$.

\end{defi}

\subsection{Obstruction cocycles and difference cochains}\label{obstruction}
Let $\A$ be a simplicial subset of $\B$. We call a simplicial map $s$ satisfying the following diagram an \textbf{$n$-partial section} relative to $\A$:
\[\xymatrix{&\E\ar[d]\\\sk_n(\B)\cup\A\ar[ur]^s\ar[r]&\B}\]

Given an $n$-partial section $s:\sk_n(\B)\cup\A\to \E$ relative to $\A$, we shall construct the \textbf{obstruction cocycle} of $s$
\[c(s)\in N^{n+1}(\B,\A;\Pi_n(\E/\B))\]
to extend a partial section $\sk_{n+1}(\B)\cup\A\to \E$ as follows: for an $(n+1)$-simplex $\sigma\in \B_{n+1}$, we get the induced section $s_\sigma$ such that
\[\xymatrix{\sigma^*\E\ar[r]&\E\\\sk_n(\Delta[n+1])\ar@{.>}[u]^{s_\sigma}\ar[r]^-{\sk_n(\sigma)}&\sk_n(\B).\ar[u]^s}\]
So we put
\[c(s)(\sigma):=g_\sigma^{-1}[s_\sigma]\in \pi_n(\sigma_0^*\E),\]
where $g_\sigma:\pi_n(\sigma_0^*\E)\to \pi_n(\sigma^*\E)$ is an isomorphism induced by the inclusion $\sigma_0^*\E\to\sigma^*\E$. 

\begin{prop}
The cochain $c(s)$ is a cocycle.

\end{prop}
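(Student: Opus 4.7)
The plan is to reduce the cocycle identity $\delta c(s)(\tau)=0$, for a fixed $(n{+}2)$-simplex $\tau\in\B_{n+2}$, to a universal statement about $\pi_n(F)$, and then to deduce it from $\pt_{\mathrm{cell}}^2=0$ via the Hurewicz map. First, by naturality of the obstruction construction, I pull back the bundle along $\tau\colon\Delta[n+2]\to\B$ to reduce to the universal case $\B=\Delta[n+2]$, $\tau=\mathrm{id}$. Then I fix a trivialization $\hat\varphi_\tau\colon\tau^*\E\xrightarrow{\sim}\Delta[n+2]\times F$ with $F=\tau_0^*\E$, so that $s_\tau$ projects to a map $\phi\colon\sk_n(\Delta[n+2])\to F$. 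Taking the restriction of $\hat\varphi_\tau$ to each face $\pt_i\Delta[n+2]$ as the trivialization computing $c(s)(\pt_i\tau)$, and using that the inclusion of the basepoint fiber into $\Delta[n+1]\times F$ induces the identity on $\pi_n$, one gets for $i\ge 1$ the identification $c(s)(\pt_i\tau)=[\phi|_{\sk_n(\pt_i\Delta[n+2])}]\in\pi_n(F)$. For $i=0$ the basepoint shifts from $\tau_0$ to $\tau_1$, but parallel transport along $\tau_{01}$ in the trivialized bundle is the identity on $\pi_n(F)$; the factor $M(\tau_{01})^{-1}$ in the formula for $\delta c(s)(\tau)$ cancels this shift, so the $i=0$ term also becomes $[\phi|_{\sk_n(\pt_0\Delta[n+2])}]$.

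The cocycle identity thus reduces to the universal equality
\[\sum_{i=0}^{n+2}(-1)^i\bigl[\phi|_{\sk_n(\pt_i\Delta[n+2])}\bigr]=0\quad\text{in }\pi_n(F).\]
Since $\Delta[n+2]$ is contractible, its $n$-skeleton is $(n{-}1)$-connected, and for $n\ge 2$ the Hurewicz theorem gives an isomorphism $\pi_n(\sk_n(\Delta[n+2]))\simeq H_n(\sk_n(\Delta[n+2]);\z)$ (the case $n=1$ is handled by passing to abelianizations, which is valid since $\pi_1(F)$ is abelian by hypothesis). The attaching class of $\sk_n(\pt_i\Delta[n+2])\hookrightarrow \sk_n(\Delta[n+2])$ corresponds under Hurewicz to the cellular chain $\pt_{\mathrm{cell}}[\pt_i\Delta[n+2]]\in C_n$, and the signed sum equals $\pt_{\mathrm{cell}}^2[\Delta[n+2]]=0$. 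Because $\pi_n(F)$ is abelian, $\phi_*$ factors through this group and sends the cycle to zero.

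The hardest part will be the identification carried out in the first paragraph: verifying that the intrinsic obstructions $c(s)(\pt_i\tau)$, each residing in its own fiber $\pi_n((\pt_i\tau)_0^*\E)$, correspond under a single global trivialization to classes in $\pi_n(F)$ in such a way that the holonomy correction in the formula for $\delta c(s)(\tau)$ is exactly absorbed by the tautological parallel transport in the product bundle. Once this compatibility is pinned down, the cellular-Hurewicz argument of the second paragraph is routine.
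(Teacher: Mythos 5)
Your argument is correct and follows essentially the same route as the paper's: both reduce $\delta c(s)(\tau)$ to the image under $(s_\tau)_*$ of the alternating sum of the face classes of $\Delta[n+2]$ in $\pi_n(\sk_n(\Delta[n+2]))$, with the paper tracking basepoints via the inclusion maps $g_\sigma$ and $(\sigma_{01})_*$ where you instead pass to a global trivialization and invoke $n$-simplicity. The only substantive addition on your side is that you actually justify the vanishing of that universal alternating sum via the Hurewicz theorem (and abelianization when $n=1$), a relation the paper simply asserts.
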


\begin{proof}
For an $(n+2)$-simplex $\sigma\in \B_{n+2}$, we have 
\[\xymatrix{(\pt_i\sigma)^*\E\ar[r]&\sigma^*\E\ar[r]&\E\\\sk_n(\Delta[n+1])\ar[r]_{\sk_n(\delta^i)}\ar[u]^{s_{\pt_i\sigma}}&\sk_n(\Delta[n+2])\ar[r]_-{\sk_n(\sigma)}\ar[u]^{s_\sigma}&\sk_n(\B).\ar[u]^s}\]
So the commutative diagrams for $i\neq0$ 
\[\xymatrix{\sigma_0^*\E\ar[dr]\ar[d]&\\(\pt_i\sigma)^*\E\ar[r]&\sigma^*\E}\quad \xymatrix{\sigma_1^*\E\ar[d]&\sigma_0^*\E\ar[d]\ar[l]_{g_{\sigma_{01}}}\\(\pt_0\sigma)^*\E\ar[r]&\sigma^*\E}\]
imply the equations
\[g_{\pt_i\sigma}^{-1}[s_{\pt_i\sigma}]=g_{\sigma}^{-1}(s_\sigma)_*[\sk_n(\delta^i)],\quad g_{\sigma_{01}}^{-1}g_{\pt_0\sigma}^{-1}[s_{\pt_0\sigma}]=g_\sigma^{-1}(s_\sigma)_*(\sigma_{01})_*[\sk_n(\delta^0)].\]
Here note that $[\sk_n(\delta^i)]\in \pi_n(\sk_n(\Delta[n+2]),0)$ and $[\sk_n(\delta^0)]\in \pi_n(\sk_n(\Delta[n+2]),1)$. Thus we obtain
\[(\delta c(s))(\sigma)=g_\sigma^{-1}(s_\sigma)_*\left((\sigma_{01})_*[\sk_n(\delta^0)]+\sum_{i\neq0}(-1)^i[\sk_n(\delta^i)]\right)=0,\]
using the relation $(\sigma_{01})_*[\sk_n(\delta^0)]+\sum_{i\neq0}(-1)^i[\sk_n(\delta^i)]=0$ in $\pi_n(\sk_n(\Delta[n+2]),0)$.
\end{proof}

We shall define the difference cochain for $n$-partial sections $s_0,s_1:\sk_n(\B)\to \E$ and a fiberwise homotopy $h:\sk_{n-1}(\B)\times \Delta[1]\to \E\times\Delta[1]$ between their restriction on $\sk_{n-1}(\B)$. Gluing these maps, we have the map
\[h^{\square}:(\sk_n(\B)\times\sk_0(\Delta[1]))\cup(\sk_{n-1}(\B)\times \Delta[1])\to \E\times\Delta[1].\]
We consider the obstruction cocycle
\[c(h^{\square})\in N^{n+1}(\sk_n(\B)\times\Delta[1],(\sk_n(\B)\times\sk_0(\Delta[1]))\cup(\sk_{n-1}(\B)\times \Delta[1]);\Pi_n^{\square}),\]
where $\Pi_n^{\square}=\Pi_n(\E\times\Delta[1]/\B\times\Delta[1])$.
Note that faces of non-degenerate simplices of $\sk_n(\B)\times \Delta[1]$ are in $(\sk_n(\B)\times\sk_0(\Delta[1]))\cup(\sk_{n-1}(\B)\times \Delta[1])$. Through the Eilenberg-Zilber map
\[\times:N_n(\B)\otimes N_1(\Delta[1])\to N_{n+1}(\sk_n(\B)\times\Delta[1],(\sk_n(\B)\times\sk_0(\Delta[1]))\cup(\sk_{n-1}(\B)\times \Delta[1])),\]
we can define the cochain $d(s_0,h,s_1)\in N^n(\B;\Pi_n(\E/\B))$ by
\[d(s_0,h,s_1)(\sigma):=(-1)^nc(h^{\square})(\sigma\times I)\]
for $\sigma\in \B_n$. Here $I$ is the unique non-degenerate simplex in $\Delta[1]_1$.
\begin{prop}
The cochain $d(s_0,h,s_1)$ satisfies
\[\delta d(s_0,h,s_1)=c(s_1)-c(s_0).\]
\end{prop}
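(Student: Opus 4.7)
The plan is a prism argument: for any $\sigma \in \B_{n+1}$, consider the chain $\sigma \times I \in N_{n+2}(\B \times \Delta[1])$ obtained via the Eilenberg-Zilber shuffle map, where $I$ is the unique non-degenerate $1$-simplex of $\Delta[1]$. Since $h^\square$ is an $n$-partial section of $\E \times \Delta[1] \to \B \times \Delta[1]$, the previous proposition gives $\delta c(h^\square) = 0$. Pairing this cocycle identity with $\sigma \times I$ and applying the Leibniz rule for the Eilenberg-Zilber cross product,
\[\partial(\sigma \times I) \;=\; (\partial \sigma) \times I \;+\; (-1)^{n+1}\bigl(\sigma \times 1 - \sigma \times 0\bigr),\]
yields the identity $c(h^\square)\bigl(\partial(\sigma\times I)\bigr) = 0$, modulo the local-coefficient twist on first faces that is discussed below.

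Next I identify the two kinds of contributions. For $\epsilon \in \{0,1\}$, the restriction of $h^\square$ to $\sk_n(\B) \times \{\epsilon\}$ equals $s_\epsilon$, and the canonical identification $(\sigma \times \epsilon)^*(\E \times \Delta[1]) \simeq \sigma^* \E$ induced by the projection to $\B$ sends the induced section to $(s_\epsilon)_\sigma$ and the holonomy $g_{\sigma\times\epsilon}$ to $g_\sigma$; hence $c(h^\square)(\sigma \times \epsilon) = c(s_\epsilon)(\sigma)$. For a side face $(\pt_i \sigma) \times I$ with $\pt_i\sigma \in \B_n$, the definition of the difference cochain directly gives $c(h^\square)\bigl((\pt_i\sigma)\times I\bigr) = (-1)^n d(s_0,h,s_1)(\pt_i\sigma)$. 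Substituting these identifications into the cocycle identity and cancelling the common sign $(-1)^n$ produces exactly
\[\delta d(s_0,h,s_1)(\sigma) \;=\; c(s_1)(\sigma) - c(s_0)(\sigma).\]

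The main obstacle is the local-coefficient twist on the $i=0$ face of the cocycle formula, which is not literally uniform across the shuffle decomposition of $\sigma \times I$: among the $(n{+}2)$ shuffle simplices, the one starting $(\sigma_0,0)\to(\sigma_0,1)\to\cdots$ has a different initial edge from the others, which all start $(\sigma_0,0)\to(\sigma_1,0)\to\cdots$. The key observation that resolves this is that the local system $\Pi_n^\square$ on $\B \times \Delta[1]$ is the pullback of $\Pi_n(\E/\B)$ along $\mathrm{pr}\colon \B \times \Delta[1] \to \B$, so the holonomy along any edge depends only on its projection to $\B$. Thus the initial edges of the shuffle simplices contribute either $M(\sigma_{01})^{-1}$ or the identity, and after assembling all signs these recombine into the single twist appearing in the definition of $\delta d(s_0,h,s_1)$. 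The residual signs are routine consequences of the shuffle formula and the sign $(-1)^n$ in the definition of $d$.
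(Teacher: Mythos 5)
Your proposal is correct and follows essentially the same route as the paper: both evaluate $\delta d(s_0,h,s_1)$ on $\sigma$ via the Eilenberg--Zilber prism $\sigma\times I$, invoke the cocycle property $\delta c(h^{\square})=0$ together with the Leibniz rule $\pt(\sigma\times I)=(\pt\sigma)\times I\pm\sigma\times\pt I$, and identify the end faces with $c(s_0),c(s_1)$ and the side faces with $d(s_0,h,s_1)$. Your extra remarks on the local-coefficient twist being pulled back from $\B$ only make explicit what the paper handles implicitly via the $g_{\sigma_{01}}^{-1}$ term.
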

\begin{proof}
It is proved by the equations
\begin{align*}
\delta d(s_0,h,s_1)(\sigma)&=g_{\sigma_{01}}^{-1}d(s_0,h,s_1)(\pt_0\sigma)+\sum_{i\neq0}(-1)^id(s_0,h,s_1)(\pt_i\sigma)\\
&=(-1)^ng_{\sigma_{01}}^{-1}c(h^{\square})(\pt_0\sigma\otimes I)+\sum_{i\neq0}(-1)^{n+i}c(h^{\square})(\pt_i\sigma\otimes I)\\
&=c(h^{\square})(\sigma\otimes \pt I)-\delta c(h^{\square})(\sigma\otimes I)\\
&=c(s_1)-c(s_0).\end{align*}
\end{proof}

The next two propositions hold in the same way as in obstruction theory \cite{St}.
\begin{prop}
An $n$-partial section $s:\sk_n(\B)\to \E$ extends to an $(n+1)$-partial section $\sk_{n+1}(\B)\to \E$ if and only if $c(s)=0$.

\end{prop}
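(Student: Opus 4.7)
The plan is to reduce the global extension problem to a simplex-by-simplex local one: for each non-degenerate $(n+1)$-simplex $\sigma$ of $\B$ not in $\A$, decide whether $s_\sigma : \sk_n(\Delta[n+1]) \to \sigma^*\E$ extends over $\Delta[n+1]$, and then identify the obstruction to that local extension with the value $c(s)(\sigma)$. Since $\sigma^*\E \to \Delta[n+1]$ is a Kan fibration over a contractible simplex, the same trivialization argument used in the preceding proposition yields an isomorphism $\hat{\varphi}_\sigma : \sigma^*\E \simeq \Delta[n+1] \times \sigma_0^*\E$ over $\Delta[n+1]$, under which $s_\sigma$ corresponds to some $\bar{s}_\sigma : \partial\Delta[n+1] \to \sigma_0^*\E$. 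Because $\sigma_0^*\E$ is a Kan complex, $\bar{s}_\sigma$ extends over $\Delta[n+1]$ if and only if $[\bar{s}_\sigma] = 0$ in $\pi_n(\sigma_0^*\E)$, and by the very definition of $g_\sigma$ this class coincides with $c(s)(\sigma)$.

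For the ``if'' direction, assume $c(s) = 0$. Then for every non-degenerate $(n+1)$-simplex $\sigma$ of $\B$ not in $\A$, the local problem admits a solution $\tilde{s}_\sigma : \Delta[n+1] \to \sigma^*\E$. Pushing these forward to $\E_{n+1}$ defines the values of an extension $\tilde{s}$ on non-degenerate simplices; on degenerate simplices and on $\A$ the values of $\tilde{s}$ are forced by the simplicial identities $\tilde{s}(s_i\tau) = s_i\tilde{s}(\tau)$ and by $\tilde{s}|_\A = s|_\A$. These prescriptions are mutually consistent precisely because each $\tilde{s}_\sigma$ was constructed to extend $s_\sigma$ on the boundary. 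The ``only if'' direction is immediate: any extension $\tilde{s}$ restricts over $\sigma$ to a lift $\Delta[n+1] \to \sigma^*\E$ extending $s_\sigma$, so $[s_\sigma] = 0$ in $\pi_n(\sigma^*\E)$ and hence $c(s)(\sigma) = 0$.

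The hardest part will be bookkeeping rather than homotopy theory. One must verify that the local solutions $\tilde{s}_\sigma$, chosen independently on each non-degenerate $(n+1)$-simplex, genuinely assemble into a simplicial map. This reduces to two compatibility checks: first, that the forced values on degenerate $(n+1)$-simplices are consistent with the chosen values on their non-degenerate faces, which uses that $c(s)$ lies in the normalized subcomplex $N^{n+1}$ so that no extra obstruction appears in the degenerate direction; and second, that the vanishing of $[\bar{s}_\sigma]$ does not depend on the chosen trivialization $\hat{\varphi}_\sigma$, since a change of trivialization alters $\bar{s}_\sigma$ only by an automorphism of $\sigma_0^*\E$ that is absorbed into the isomorphism $g_\sigma$ used to define $c(s)$ in the first place.
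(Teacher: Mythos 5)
Your argument is correct and is exactly the standard Steenrod-style argument that the paper itself invokes without writing out (it states that this proposition "holds in the same way as in obstruction theory [St]"): reduce to the local extension problem over each non-degenerate $(n+1)$-simplex via a trivialization of $\sigma^*\E$, identify the local obstruction with $c(s)(\sigma)$ through $g_\sigma$, and assemble, with the degenerate simplices handled by normalization. No gaps; the bookkeeping you flag is precisely the content that needs checking, and your reasons for why it works are the right ones.
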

\begin{prop}
For $n$-partial sections $s,s':\sk_n(\B)\to \E$, if obstruction cocycles $c(s)$ and $c(s')$ are cohomologous, there is a homotopy between $s|\sk_{n-1}(\B)$ and $s'|\sk_{n-1}(\B)$.

\end{prop}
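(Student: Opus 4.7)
The plan is to interpret a homotopy $h : \sk_{n-1}(\B) \times \Delta[1] \to \E$ between $s|\sk_{n-1}(\B)$ and $s'|\sk_{n-1}(\B)$ as a section of the pulled-back bundle $\E \times \Delta[1] \to \B \times \Delta[1]$ over the subcomplex
\[
\sk_n(\B \times \Delta[1]) = (\sk_n(\B) \times \pt\Delta[1]) \cup (\sk_{n-1}(\B) \times \Delta[1]),
\]
whose restriction to $\sk_n(\B) \times \pt\Delta[1]$ is already specified by $s \sqcup s'$. Building such a section amounts to filling in the prism cells $\sigma \times I$ for $\sigma \in \B_k$ and $0 \leq k \leq n-1$, each of which contributes a $(k+1)$-dimensional cell to $\B \times \Delta[1]$. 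I would carry this out inductively on $k$, mirroring the classical argument in Steenrod \cite{St}.

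For $0 \leq k \leq n-2$ the inductive step is unobstructed in the following sense: any partial extension across prism cells at level $k$ may be freely modified by a cochain in $N^{k+1}(\B; \Pi_k(\E/\B))$ supported on the top cells, and the computation that produced the preceding proposition (applied to the partial $h^\square$ at level $k$) shows that the remaining obstruction cocycle is shifted by the coboundary of that cochain. Consequently the obstruction at each intermediate level is automatically a coboundary and can be eliminated by the appropriate choice, so one arrives at a partial homotopy defined on $(\sk_n(\B) \times \pt\Delta[1]) \cup (\sk_{n-2}(\B) \times \Delta[1])$.

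At the top step $k = n-1$ the extension across the prism cells $\sigma \times I$ ($\sigma \in \B_{n-1}$) is parametrised via Eilenberg--Zilber by a cochain $d \in N^n(\B; \Pi_n(\E/\B))$, and the very calculation used in the preceding proposition identifies the residual obstruction with $c(s') - c(s) - \delta d$. The hypothesis that $c(s)$ and $c(s')$ are cohomologous supplies a $d$ with $\delta d = c(s') - c(s)$; choosing the top-level partial homotopy to realise this $d$ kills the obstruction, and the resulting section of $\E \times \Delta[1] \to \B \times \Delta[1]$ on $\sk_n(\B \times \Delta[1])$, restricted to $\sk_{n-1}(\B) \times \Delta[1]$, is the required homotopy.

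The main obstacle is justifying the intermediate-step vanishing for $k < n-1$ rigorously in the normalised, locally-coefficient cochain framework of Section~\ref{localsys}, i.e., verifying that every intermediate obstruction is a coboundary of a cochain admissible with respect to the Kan and normalisation constraints. This is a routine but careful check; the key ingredient is that the filtration of $\B \times \Delta[1]$ by prism level matches the Eilenberg--Zilber decomposition on which the difference-cochain formula in the preceding proposition was built, so the same kind of sign-and-face bookkeeping suffices at every level.
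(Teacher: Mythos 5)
There is a genuine gap, and it is exactly the point you set aside as ``a routine but careful check.'' In your induction the real issue at prism level $k$ ($0\le k\le n-1$) is not how the choices made at that level shift the next obstruction, but whether the $(k+1)$-dimensional prisms $\sigma\times I$, $\sigma\in\B_k$, can be filled at all: the map is already prescribed on the entire boundary of each such prism (the two ends by $s$ and $s'$, the sides by the lower levels), so the obstruction to filling it is a class in $\pi_k$ of the fibre, and these assemble into a cochain $d_k$ with coefficients in $\Pi_k(\E/\B)$. By the preceding proposition applied in degree $k$ (together with $c(s|\sk_k(\B))=c(s'|\sk_k(\B))=0$ for $k<n$, both sections extending one skeleton further), $d_k$ is a cocycle, and modifying the fillings at level $k-1$ changes it by a coboundary; what must vanish is therefore the class $[d_k]\in H^k(\B;\Pi_k(\E/\B))$. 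This is not ``automatically a coboundary,'' and the hypothesis $[c(s)]=[c(s')]\in H^{n+1}(\B;\Pi_n(\E/\B))$ gives no control over it, since the coefficients are entirely different homotopy groups. For instance, take $\B$ the simplicial circle, $n=2$, the trivial bundle with a $2$-simple fibre having $\pi_1\neq 0$, $s$ the constant section and $s'$ the section tracing a non-nullhomotopic loop: both obstruction cocycles vanish (there are no nondegenerate $3$-simplices), yet $s|\sk_1(\B)$ and $s'|\sk_1(\B)$ are not fibrewise homotopic. The proposition is really only valid, and only used, under the hypothesis of the paragraph that follows it, namely that the fibre is $(n-1)$-connected; then every $\Pi_k(\E/\B)$ with $k\le n-1$ vanishes, all prisms fill unconditionally, and the cohomologousness hypothesis is not needed at all. (This is also the situation in Steenrod, to which the paper defers without giving a proof.)

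Your use of the hypothesis at the top step is correspondingly misplaced. Once the prisms over $(n-1)$-simplices are filled, the homotopy on $\sk_{n-1}(\B)\times\Delta[1]$ is complete and nothing remains to be arranged: the identity $\delta d(s,h,s')=c(s')-c(s)$ is then a consequence of the preceding proposition, not a solvability condition, and the fillings at level $n-1$ are parametrised by $C^{n-1}(\B;\Pi_n(\E/\B))$ acting on $d(s,h,s')$ through $\delta$, not by an arbitrary $d\in N^n(\B;\Pi_n(\E/\B))$ with $\delta d=c(s')-c(s)$. The genuine top-level obstruction lives in $H^{n-1}(\B;\Pi_{n-1}(\E/\B))$, about which the stated hypothesis is silent.
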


Suppose a fiber $\X$ of a Kan fiber bundle $\E\to\B$ is $(n-1)$-connected (and $\pi_1(\X,x)$ is abelian if $n=1$). Then we can get an $n$-partial section $s:\sk_n(\B)\to \E$. If we get another $n$-partial section $s'$, these is a homotopy between $s|\sk_{n-1}(\B)$ and $s'|\sk_{n-1}(\B)$. So we obtain an invariant 
\[\o_n(\E):=[c(s)]\in H^{n+1}(\B;\Pi_n(\E/\B)).\]
It is called the \textbf{obstruction class} of $\E\to\B$.

\subsection{Obstruction for $n=0$}\label{non-abelian}

We consider an extension of a $0$-partial section under the following situation: for a simplicial bundle $\E\to\B$, suppose that the local system $\Pi_0(\E/\B)$ of sets has a free and transitive right action of a local system $\G$ of groups on $\B$. 

At first, we define the non-abelian obstruction class of a $0$-partial section. For that, we remark the definition of the non-abelian cohomology with values in a local system of non-abelian groups. Here ``non-abelian cohomology'' is in the sense of \cite{Fre}.

\begin{defi}
Let $\X$ be a simplicial set and $\G$ a local system of groups on $\X$. Define the \textbf{(non-abelian) cochain complex} of $\X$ with coefficient $\G$ 
\[C^n(\X;\G):=\left\{c:\X_n\to \coprod_{v\in \X_0}\G(v);\ c(x)\in \G(x_0)\right\}\]
for $0\leq n\leq 2$ and the following datum:
\begin{enumerate}
\item the affine action $\varphi$ of $C^0(\X;\G)$ on $C^1(\X;\G)$:
\[(\varphi(f)c)(\gamma)=f(\pt_1\gamma)c(\gamma)(\G(\gamma)^{-1}f(\pt_0\gamma)^{-1})\]
for $f\in C^0(\X;\G)$ and $c\in C^1(\X;\G)$,
\item the action $\psi$ of $C^0(\X;\G)$ on $C^2(\X;\G)$:
\[(\psi(f)c)(\sigma)=\Ad(\G(\pt_2\sigma)^{-1}f(\pt_0\pt_2\sigma))(c(\sigma))\]
for $f\in C^0(\X;\G)$ and $c\in C^2(\X;\G)$,
\item the map $\delta:C^1(\X;\G) \to C^2(\X;\G)$ satisfying $\delta(1)=1$ and $\delta(\varphi(f)c)=\psi(f)c$ for $f\in C^0(\X;\G)$ and $c\in C^1(\X;\G)$:
\[\delta c(\sigma)=(\G(\pt_2\sigma)^{-1}c(\pt_0\sigma))c(\pt_1\sigma)^{-1}c(\pt_2\sigma)\]
for $c\in C^1(\X;\G)$ and $\sigma\in \X_2$.
\end{enumerate}
The we get the \textbf{0-th cohomology group}
\[H^0(\X;\G):=\Ker (C^0(\X;\G)\to \Aut(C^1(\X;\G))\ltimes C^1(\X;\G)\to C^1(\X;\G))\]
and the \textbf{1-st cohomology set}
\[H^1(\X;\G):=\delta^{-1}(1)/C^0(\X;\G).\] 
\end{defi}

Given a $0$-partial section $s:\sk_0(\B)\to \E$, put
\[c(s)(\gamma)=[s(\pt_1\gamma)]^{-1}(\Pi_0(\gamma)^{-1}[s(\pt_0\gamma)])\in \G_{\gamma_0}\]
for $\gamma\in\B_1$, i.e., $c(s)(\gamma)\in\G_{\gamma_0}$ is the unique element satisfying
\[[s(\pt_1\gamma)]c(s)(\gamma)=\Pi_0(\gamma)^{-1}[s(\pt_0\gamma)].\]
By definition, $c(s)\in C^1(\B;\G)$ is a cocycle. For another section $s':\sk_0(\B)\to \E$, if we can get $f\in C^0(\B;\G)$ uniquely such that 
\[s'(x)=s(x)f(x)\]
for $x\in X_0$, then $c(s')=\varphi(f)c(s)$ holds. We denote $f$ by $d(s,s')$ as in Section \ref{obstruction}. Especially the cohomology class \[\o_0(\E):=[c(s)]\in H^1(\B;\G)\] is independent of a choice of a $0$-partial section $s:\sk_0(\B)\to \E$. As usual obstruction theory, $\o_0(\E)=1$ if and only if there is a $1$-partial section $\sk_1(\B)\to\E$. It follows from the following proposition:
\begin{prop}
If $\o_0(\E)=1$, there exists a $0$-partial section $s:\sk_0(\B)\to \E$ such that $c(s)=1$.

\end{prop}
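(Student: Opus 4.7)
The plan is to convert the cohomological hypothesis $\o_0(\E)=1$ into an explicit twist of a chosen $0$-partial section, using the fact that the $\G$-action on $\Pi_0(\E/\B)$ is free and transitive.

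First I would pick any $0$-partial section $s_0:\sk_0(\B)\to\E$; such a section exists because the fiber $\Pi_0(\E/\B)_v$ is nonempty at every vertex $v\in\B_0$. By construction the cocycle $c(s_0)\in \delta^{-1}(1)\subset C^1(\B;\G)$ represents the class $\o_0(\E)$, so the hypothesis $\o_0(\E)=1$ in $H^1(\B;\G)=\delta^{-1}(1)/C^0(\B;\G)$ produces an element $f\in C^0(\B;\G)$ with $\varphi(f)c(s_0)=1$.

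Next I would twist $s_0$ by $f$. Because $\G_v$ acts freely and transitively on $\Pi_0(\E/\B)_v$, there is a unique new $0$-partial section $s:\sk_0(\B)\to\E$ characterised by
\[
[s(v)] = [s_0(v)]\cdot f(v), \qquad v\in\B_0.
\]
Applying the identity $c(s)=\varphi(f)c(s_0)$ recalled in the paragraph preceding the proposition then gives $c(s)=1$, which is the desired conclusion.

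The only step that actually requires computation is this transformation rule $c(s)=\varphi(f)c(s_0)$ for the twisted section, and this is the main (if minor) obstacle. I would verify it by substituting $[s(\pt_i\gamma)]=[s_0(\pt_i\gamma)]\cdot f(\pt_i\gamma)$ into the defining equation
\[
[s(\pt_1\gamma)]\,c(s)(\gamma) \;=\; \Pi_0(\gamma)^{-1}[s(\pt_0\gamma)],
\]
using the $\G$-equivariance $\Pi_0(\gamma)^{-1}(x\cdot g)=\Pi_0(\gamma)^{-1}(x)\cdot \G(\gamma)^{-1}(g)$ of parallel transport, and cancelling $[s_0(\pt_1\gamma)]$ by freeness of the action to read off precisely the formula defining $\varphi(f)c(s_0)$. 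Once this compatibility is in hand the proposition reduces to choosing $f$ correctly, which is exactly what $\o_0(\E)=1$ delivers.
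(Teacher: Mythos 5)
Your argument is correct and is essentially the paper's own proof: the paper likewise takes $f$ with $c(s)=\varphi(f)(1)$ and replaces $s$ by the twisted section $sf^{-1}$, relying on the same transformation rule $c(sf)=\varphi(f^{\pm1})c(s)$ that you verify via equivariance of $\Pi_0(\gamma)^{-1}$ and freeness of the $\G$-action. The only caveat is a harmless bookkeeping issue (present in the paper as well): with the stated definition of $\varphi$, the twist by $f$ produces $\varphi(f^{-1})c(s_0)$ rather than $\varphi(f)c(s_0)$, so one should twist by $f^{-1}$ (or rename $f$), which does not affect the conclusion.
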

\begin{proof}
If $[c(s)]=1$, there exists $f\in C^0(\B;\G)$ such that $c(s)=\varphi(f)(1)$. So replacing $s$ with $sf^{-1}$, we get the proposition.
\end{proof}

The non-abelian obstruction $\o_0(\E)$ is hard to deal with. So we shall consider to get a certain abelian cocycle using a filtration $\{\F^{(i)}\G\}_{i=1}^\infty$ of $\G$ such that 
\[\G_b=\F^{(1)}\G_b\rhd \F^{(2)}\G_b\rhd\cdots ,\]
\[[\F^{(i)}\G_b,\F^{(j)}\G_b]\subset \F^{(i+j)}\G_b\]
for $b\in \B_0$, and the map $\G(\gamma)$ for $\gamma\in\B_1$ preserves the filtration. Given such a filtration, we have the local system of abelian groups defined by
\[\gr_i(\G):=\F^{(i)}\G/\F^{(i+1)}\G.\]
It is also written by $\gr_i(\G)_b=\gr_i(\G_b)$ for $b\in \B_0$.

If the image of $c(s)$ to $C^1(\B;\G/\F^{(i)}\G)$ is trivial, i.e., $c(s)(\gamma)\in \F^{(i)}\G_{\gamma_0}$ for $\gamma\in \B_1$, we get its image $c_i(s)$ to the (abelian) chain complex $C^1(\B;\gr_i(\G))$. For another partial section $s':\sk_0(\B)\to \E$ satisfying the same condition, we can also get the image $d_i(s,s')$ of $d(s,s')$ to $C^1(\B;\gr_i(\G))$. Then it satisfies the equation
\[c_i(s')-c_i(s)=\delta d_i(s,s').\]
It means $\o^{(i)}(\E):=[c_i(s)]\in H^1(\B;\gr_i(\G))$ is obtained uniquely.
\begin{prop}
If $\o^{(i)}(\E)$ is defined and trivial, there exists a partial section $s:\sk_0(\B)\to \E$ such that $c(s)(\gamma)\in \F^{(i+1)}\G_{\gamma_0}$ for $\gamma\in \B_1$.

\end{prop}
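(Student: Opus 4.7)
The plan is to mimic the proof of the preceding proposition (the $\o_0(\E)=1$ case) one filtration step at a time, using that the quotient $\gr_i(\G)$ is abelian. Concretely, I begin with a $0$-partial section $s:\sk_0(\B)\to\E$ whose obstruction $c(s)(\gamma)$ already lands in $\F^{(i)}\G_{\gamma_0}$, so that the reduction $c_i(s)\in C^1(\B;\gr_i(\G))$ is defined. The assumption $\o^{(i)}(\E)=0$ gives a $0$-cochain $\bar f\in C^0(\B;\gr_i(\G))$ with $c_i(s)=\varphi(\bar f)(1)$, that is,
\[
c_i(s)(\gamma)=\bar f(\pt_1\gamma)\cdot\gr_i(\G)(\gamma)^{-1}\bar f(\pt_0\gamma)^{-1}
\]
in the abelian group $\gr_i(\G_{\gamma_0})$.

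Next I lift $\bar f$ pointwise: for every $v\in\B_0$ choose $f(v)\in\F^{(i)}\G_v$ mapping to $\bar f(v)$ in $\gr_i(\G_v)$, which is possible because $\F^{(i)}\G_v\twoheadrightarrow\gr_i(\G_v)$ is surjective. This defines $f\in C^0(\B;\G)$ whose values lie in $\F^{(i)}\G$. Now I set $s'(v):=s(v)\cdot f(v)^{-1}$, so that $d(s,s')=f^{-1}$ and hence $c(s')=\varphi(f^{-1})c(s)$, i.e.
\[
c(s')(\gamma)=f(\pt_1\gamma)^{-1}\,c(s)(\gamma)\,\G(\gamma)^{-1}f(\pt_0\gamma).
\]
Since $c(s)(\gamma)\in\F^{(i)}\G$ and the $f(v)$'s also lie in $\F^{(i)}\G$, the value $c(s')(\gamma)$ still belongs to $\F^{(i)}\G_{\gamma_0}$, so its reduction $c_i(s')$ is defined.

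The final step is a direct check in the abelian group $\gr_i(\G_{\gamma_0})$. Reducing the formula for $c(s')(\gamma)$ and substituting the coboundary relation for $c_i(s)$ yields
\[
c_i(s')(\gamma)=-\bar f(\pt_1\gamma)+c_i(s)(\gamma)+\gr_i(\G)(\gamma)^{-1}\bar f(\pt_0\gamma)=0,
\]
so $c(s')(\gamma)\in\F^{(i+1)}\G_{\gamma_0}$, as desired.

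The only subtle point I expect is justifying the passage from the genuine (non-abelian) product in $\F^{(i)}\G_{\gamma_0}$ to the additive computation in $\gr_i(\G_{\gamma_0})$: this is exactly where the hypothesis $[\F^{(i)}\G,\F^{(j)}\G]\subset\F^{(i+j)}\G$ is used, since it guarantees that $\F^{(i)}\G/\F^{(i+1)}\G$ is abelian and that the multiplication on $\F^{(i)}\G$ induces addition modulo $\F^{(i+1)}\G$. The compatibility of $\G(\gamma)$ with the filtration, which is part of the standing assumption on $\{\F^{(i)}\G\}$, ensures that the conjugation by $\G(\gamma)^{-1}$ descends to $\gr_i(\G)(\gamma)^{-1}$ and makes the final cancellation legitimate.
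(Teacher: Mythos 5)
Your proof is correct and follows essentially the same route as the paper: the paper's own argument asserts that $1=[c(s)]\in H^1(\B;\G/\F^{(i+1)}\G)$ and then modifies $s$ by a $0$-cochain exactly as in the preceding proposition, and your lift-and-modify computation is just that step written out explicitly (including the justification, via $[\F^{(i)}\G,\F^{(i)}\G]\subset\F^{(i+1)}\G$, that the non-abelian product collapses to the abelian coboundary in $\gr_i(\G)$). The only cosmetic issues are sign/inverse conventions in $\varphi(f)$ versus $s'=sf^{\pm1}$, which are absorbed by replacing $\bar f$ with $-\bar f$, and the phrase ``conjugation by $\G(\gamma)^{-1}$,'' which should read ``application of the filtration-preserving map $\G(\gamma)^{-1}$.''
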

\begin{proof}
Supposing $\o^{(i)}(\E)=[c_i(s)]=1$, we have $1=[c(s)]\in H^1(\B;\G/\F^{(i+1)}\G)$. Then there exists a $0$-partial section $s':\sk_0(\B)\to \E$ such that $c(s')=1\in C^1(\B;\G/\F^{(i+1)}\G)$. This section satisfies the required condition.
\end{proof}

\section{Obstruction of the bundles of formal homology connections}\label{mainsection}

Let $X\to E\to B$ be a smooth fiber bundle with homological structure group $G$. Fix a $G$-invariant Chen differential $\delta$ on $\ct$, where $W=\tilde{H}_\bullet(X;\r)[-1]$.

In this section, we shall apply the construction of the obstruction class in Section \ref{OT} for $\Q_\bullet(E)\to S_\bullet(B)$ and get characteristic classes using the obstruction. 
\begin{itemize}
\item In Section \ref{connected}, we consider the case that $Q_\bullet(X)$ is connected.
\item In Section \ref{Exsp}, we give a non-trivial example of the construction in Section \ref{connected}.
\item In Section \ref{nonconnected}, we consider the case that $Q_\bullet(X)$ is not connected.
\item In Section \ref{Exsur}, we give a non-trivial example of the construction in Section \ref{nonconnected}. We shall prove that the our obstruction class for a surface bundle is equivalent to the 1st Morita-Miller-Mumford class.
\end{itemize}

\subsection{Connected cases}\label{connected}
Suppose $\H_0^{(1)}(\delta)=0$ and $\H_i(\delta)=0$ for $n>i>0$. In addition, suppose, if $n=1$, $\H_1(\delta)\simeq H_0(\hat{L}W\otimes A,d+\delta+[\tau,-])$ is abelian with respect to the Baker-Campbell-Hausdorff product. Then we get the obstruction class of the simplicial bundle $\Q_\bullet(E,\hat{\delta})\to S_\bullet(B)$
\[\o=\o_n(\Q_\bullet(E,\hat{\delta}))\in H^{n+1}(B;\Pi_n),\]
where $\Pi_n=\Pi_n(\Q_\bullet(E,\hat{\delta})/S_\bullet(B))$, and the characteristic maps of a fiber bundle $E\to B$
\[(\Lambda^p\H_n(\delta)^*)^G\to H^{p(n+1)}(B;\r)\]
by $\psi\mapsto \psi(\o,\dots,\o)$ for $p\geq 1$. Strictly speaking, the map is defined as follows. First take a family $\{\varphi_i:E|U_i\simeq U_i\times X\}$ of local trivializations of $E$ over an open covering $\{U_i\}$ of $B$ such that the images of transition functions is included in the structure group. For a simplex $\sigma\in S_{n+1}(B)$, choose an open set $U_i$ containing a vertex $b=\sigma_0$. Then the trivialization $\varphi_i$ induces the isomorphism $g_b:\Pi_n(b)= \H_n(\hat{\delta}(b))\simeq\H_n(\delta)$. If $p=1$, put $\psi(\o)(\sigma):=[\psi(g_bc(\sigma))]$ when $\o$ is represented by a cochain $c$. For a generic integer $p>1$, the value of $\psi$ is defined by the cup product $\psi(\o,\dots,\o):=\sum\psi_{j_1}(\o)\cdots\psi_{j_p}(\o)$ when $\psi$ is described by the sum $\sum\psi_{j_1}\cdots \psi_{j_p}$ for $\psi_j\in \H_n(\delta)^*$. It is independent of choices of open sets and a family of trivializations because of $G$-invariance of $\psi$.

\subsection{Example of a sphere bundle}\label{Exsp}

We consider the sphere bundle $S^2\to E=S^3\times_{S^1}S^2\to S^2$ associated to the Hopf fibration $S^1\to S^3\to S^2$, where $U(1)=S^1$ acts on $S^2=\c\cup\{\infty\}$ by rotations. Here
\[S^3=\{(z_1,z_2)\in \c^2;\ |z_1|^2+|z_2|^2=1\},\]
\[E=S^3\times_{S^1}S^2=S^3\times S^2/((z,w)\sim (\zeta z,\zeta^{-1} w),\ \zeta\in S^1),\]
where $S^1=\{\zeta\in \c;\ |\zeta|=1\}$. The image of $(z,w)\in S^3\times S^2$ in $E$ is denoted by $[z,w]$. We use the identification $S^3/S^1\simeq S^2=\c\cup \{\infty\}$ defined by $[z_1,z_2]\mapsto z_1/z_2$.

Since the action of $S^1$ on $S^2$ has two fixed points $0$ and $\infty$, this fiber bundle has a section $S^2=S^3/S^1\to S^3\times_{S^1}S^2$ defined by $[b]\mapsto [b,\infty]$. We fix the section. 

Denote the volume form on the fiber $S^2=\c \cup\{\infty\}$ by
\[v=\frac{\sqrt{-1}}{2\pi}\frac{dwd\bar{w}}{(1+|w|^2)^2},\]
where $w$ is the complex coordinate of the Riemann sphere $S^2$, and the desuspension of the fundamental class by $x=\sus^{-1}[v]\in W=H_2(S^2)[-1]$. Then a DGL model $(LW,\delta)$ of $S^2$ is given by
\[LW=L(x)\ (|x|=1),\quad \delta x=0\]
and its Lie algebra of derivations 
\[\Der(LW)=\left\langle x\frac{\pt}{\pt x},[x,x]\frac{\pt}{\pt x}\right\rangle.\]
Here $\braket{v_1,\dots,v_n}$ is the vector space generated by $v_1,\dots,v_n$, and $L(v_1,\dots,v_n)$ is the Lie algebra generated by $v_1,\dots,v_n$. We also get 
\[\H_1(\delta)=\Der(LW)_1=\left\langle [x,x]\frac{\pt}{\pt x}\right\rangle.\]

For simplicity, we restrict the bundle $\Q_\bullet(E)\to S_\bullet(S^2)$ to the Kan complex defined by
\[K_n=\{(\Delta^n,\sk_1\Delta^n)\to (S^2,\infty)\}\subset S_n(S^2).\]
If $n\leq 1$, $K_n$ is described by
\[K_0=\{p_\infty\},\quad K_1=\{\gamma_\infty\},\]
where $p_\infty:\Delta^0\to S^2$ and $\gamma_\infty:\Delta^1\to S^2$ are constant maps to the point $\infty$. We put $\Q_\bullet:=\Q_\bullet(E)|_{K_\bullet}$.

Put $D^2=\{z\in\c;|z|\leq1\}\subset \c$. We use the map $\rho:D^2\to S^2$ defined by 
\[\rho(z)=\begin{cases}2z/(1-|z|^2)&(|z|<1)\\\infty&(|z|=1),\end{cases}\]
the trivialization $\varphi_\rho:D^2\times S^2\to \rho^*E$ defined by
\[\varphi_\rho(z,w)=\left(z,\left[\left(\frac{2z}{1+|z|^2},\frac{1-|z|^2}{1+|z|^2}\right),w\right]\right).\]
Choose an orientation-preserving diffeomorphism $h:\Delta^2/(\delta^1\Delta^1\cup\delta^2\Delta^1)\simeq D^2$ such that 
\[[0,1]= \Delta^1\overset{\delta^0}\to \Delta^2/(\delta^1\Delta^1\cup\delta^2\Delta^1)\overset{h}\to D^2\]
is given by $t\mapsto e^{2\pi\sqrt{-1}t}$. Here the standard 1-simplex $\Delta^1$ is identified with the interval $[0,1]$ by the isomorphism $[0,1]\simeq \Delta^1:t\mapsto (t,1-t)$. By composing the projection $\Delta^2\to \Delta^2/(\delta^1\Delta^1\cup\delta^2\Delta^1)$, we get $\bar{h}:\Delta^2\to D^2$ and the 2-simplex $\sigma=\rho\bar{h}:\Delta^2\to S^2$ in $K_\bullet$. Then we have the trivialization $\varphi_\sigma:\Delta^2\times S^2\simeq \sigma^*E$ and its restriction $\varphi_{\sigma,\pt_0}:\Delta^1\times S^2\simeq (\pt_0\sigma)^*E=\gamma_\infty^*E$ satisfying
\[\xymatrix{\Delta^1\times S^2\ar[d]^{\varphi_{\sigma,\pt_0}}\ar[r]^{\delta^0\times \id}&\Delta^2\times S^2\ar[d]^{\varphi_\sigma}\ar[r]^{\bar{h}\times \id}&D^2\times S^2\ar[d]^{\varphi_\rho}&\\\gamma_\infty^*E\ar[r]\ar[d]&\sigma^*E\ar[r]\ar[d]&\rho^*E\ar[r]\ar[d]&E\ar[d]\\\Delta^1\ar[r]^{\delta^0}&\Delta^2\ar[r]^{\bar{h}}&D^2\ar[r]^\rho&S^2.}\]
On the other hand, we have the trivialization $\varphi_{\gamma_\infty}:\Delta^1\times S^2\simeq  \Delta^1\times E_\infty=\gamma_\infty^*E $ defined by $(t,w)\mapsto (t,[(1,0),w])$. Then the transition function $g=\varphi_{\gamma_\infty}^{-1}\circ \varphi_{\sigma,\pt_0}:\Delta^1\times S^2\simeq \Delta^1\times S^2$ is described by 
\[g(t,w)=\varphi_{\gamma_\infty}^{-1}(t,[(e^{2\pi\sqrt{-1}t},0),w])=(t,e^{-2\pi\sqrt{-1}t}w).\]
We can also define the trivialization $\varphi_{p_\infty}:S^2\simeq p_\infty^*E=E_\infty$ on $\infty\in S^2$
\[\varphi_{p_\infty}(w)=[(1,0),w].\]

The partial section $s:\sk_1K\to \Q$ is defined as follows: 
\[s(p_\infty):=v_0 x\in \Q_0(E)_{p_\infty},\quad s(\gamma_\infty):=v_1x\in \Q_1(E)_{\gamma_\infty},\]
where $v_0:=(\varphi_{p_\infty}^{-1})^*v\in A^2(E_\infty)$ and $v_1:=(\varphi_{\gamma_\infty}^{-1})^*v\in A^2(\gamma_\infty^*E)$. Remark that $v\in A^2(S^2)\subset A^2(\Delta^1\times S^2)$. According to Section \ref{obstruction}, we get $s_\sigma:\sk_1(\Delta[2])\to\sigma^*\Q_\bullet(E) $. Its homotopy class is equal to $[s_\sigma]=[v_1x]\in \pi_1(\sigma^*\Q_\bullet(E),v_0x)$. So we have
\[c(s)(\sigma)=g^*[s_\sigma]=g^*[v_1x]=[g^*(v_1)x]\in\pi_1(Q_\bullet(S^2),vx)\]
under the identification $\varphi_{p_\infty}^*:\pi_1(Q_\bullet(E_\infty),v_0x)\simeq \pi_1(Q_\bullet(S^2),vx)$. By direct calculation, we get
\[g^*(v)=v+\xi dt,\]
where 
\[\xi=-\frac{\bar{w}dw+wd\bar{w}}{(1+|w|^2)^2}=df,\quad f(w)=\frac12 \frac{1}{1+|w|^2}.\]
Then the 2-form
\[\Xi=t_1\xi dt_2-t_2\xi dt_1+2fdt_1dt_2,\]
satisfies the equation
\[(v+\Xi)^2=2v\Xi=4 fv dt_1dt_2=-4fvdt_0dt_2=-4d(fv(t_0dt_2-t_2dt_0)).\]
So we obtain the formal homology connection $\alpha=(v+\Xi)x-4fv(t_0dt_2-t_2dt_0)[x,x]\in Q_2(S^2)$ satisfying
\[\pt_0\alpha=(v+\xi dt_0)x,\quad \pt_1\alpha=vx+4fvdt_0[x,x],\quad \pt_2\alpha=vx.\]
Therefore the equation
\[[g^*(v_1)x]=[(v+\xi dt_0)x]=[vx+4fvdt_0[x,x]]\in \pi_1(Q_\bullet(S^2),vx)\]
holds. Furthermore
\[\int_{S^2}4fv=\int_{S^2}\frac{\sqrt{-1}}{\pi}\frac{dwd\bar{w}}{(1+|w|^2)^3}=\frac{1}{\pi}\int_0^\infty \frac{2rdr}{(1+r^2)^3}\int_0^{2\pi}d\theta=2\int_0^\infty\frac{dx}{(1+x)^3}=1\]
means that the de Rham cohomology class $[4fv]\in H^2(S^2)$ is non-trivial. According to Theorem 4.10 of \cite{B1}, we have $c(s)(\sigma)\neq0$ and \[\o=[c(s)]\neq0\in H^2(K;\H_1(\delta)).\]

Finally evaluating the class with the dual basis $\nu$ of $[x,x]\pt/\pt x\in \Der(LW)_1$, we get the non-trivial characteristic class
\[\nu(\o)\in H^2(K)=H^2(S^2),\]
which is the Euler class of the sphere bundle $E\to S^2$ (given in \cite{Mil}).

\subsection{Non-connected cases}\label{nonconnected}
If $\H_0^{(1)}(\delta)\neq0$, we can apply the construction in Section \ref{non-abelian}. Put $\Pi_0=\Pi_0(\Q(E,\hat{\delta})/S_\bullet(B))$. From Theorem \ref{ThKMT}, the group $\QA^{(1)}(\hat{\delta}(b))$ acts on $\Pi_0(b)$ freely and transitively. 

The local system $\QA^+(E)$ of groups is defined by
\[\QA^+(E)_b:= \QA^{(1)}(\hat{\delta}(b)),\quad \gamma_*(f):=(g_\gamma^{-1})^*\circ f\circ (g_\gamma)^*\]
for $b\in B$, $\gamma\in S_1(B)$ and $f\in \QA^+(E)_{\gamma(0)}$, where $g_\gamma:E_{\gamma(0)}\to E_{\gamma(1)}$ is the holonomy along $\gamma$. Then we get the non-abelian obstruction class \[\o_0=\o_0(\Q_\bullet(E))\in H^1(B;\QA^+(E))\] in Section \ref{non-abelian}.

Furthermore we have the filtration $\{\QA^{(i)}(E)\}_{i=1}^\infty$ of $\QA^+(E)$ defined in Section \ref{der}. By the observations in Section \ref{der}, there exists the identification as local system of vector spaces
\[\gr_i(\QA^+(E))\simeq \gr_i(\H_0^+(E)),\]
where the local system $\H_0^+(E)$ of Lie algebras is defined in the same way as $\QA^+(E)$. Here note that $\gr(\H_0^+(E))$ is defined similarly to $\gr(\QA^+(E))$ using its filtration.

Suppose we get the obstruction class $\o^{(i)}\neq 0\in H^1(B;\gr_i(\H_0^+(E)))$. In the same way as in Section \ref{connected}, the characteristic map 
\[(\Lambda^\bullet\gr_i(\H_0(\delta))^*)^G\to H^\bullet(B;\r)\]
is obtained by $\psi\mapsto \psi(\o^{(i)},\dots,\o^{(i)})$.

Especially, if $X$ is formal and $\delta$ corresponds to the product of the cohomology $H$ of $X$,
we obtain the characteristic map 
\[(\Lambda^\bullet\H_0^i(\delta)^*)^G\to H^\bullet(B;\r).\]

When $X$ is an oriented closed manifold, we shall clarify a relation between the characteristic map constructed in \cite{KMT} and the construction above. According to Chen's theorem, given a metric of the fiber bundle $E\to B$, we have the map $s:B\to \Q_0(E)$ as follows: for $b\in B$, the metric on $E_b$ gives the formal homology connection $s(b)$ on $E_b$ compatible with the Hodge decomposition of $E_b$.

Composing the natural projection $\Q_0(E)\to \CD(E)$ with $s$, we get the section $\hat{\delta}:B\to \CD(E)$.

\begin{thm}\label{rel:KMT}
Let $X$ be an oriented closed manifold and $E\to B$ be a smooth bundle with section and metric. Suppose the metric gives a section $\hat{\delta}$ of Chen differentials corresponding to a $G$-invariant Chen differential $\delta$ of $X$. Then we have the commutative diagram of chain complexes
\[\xymatrix{C_{CE}^\bullet(\H_0^{(1)}(\delta))^G\ar[r]^-{\Phi}\ar[d]&A^\bullet(B)\ar[d]^\int\\(\Lambda^\bullet\gr_1(\H_0(\delta))^*)^G\ar[r]^-{\Phi_1}&C^\bullet(B;\r),}\]
where the first row map $\Phi$ is the characteristic map in \cite{KMT}, the second row $\Phi_1$ is the characteristic map defined by\[\Phi_1(\zeta):=\sum\zeta_{i_1}(c_1(s))\cdots\zeta_{i_p}(c_1(s)),\]
\[\zeta_i(c_1(s))(\gamma):=\zeta_i(c_1(s)(\gamma))\quad (\gamma\in S_1(B))\]
for $\zeta=\sum\zeta_{i_1}\cdots \zeta_{i_p}\in (\Lambda^p\gr_1(\H_0(\delta))^*)^G$, the first column is the natural projection and the second column $\int$ is the de Rham map.

\end{thm}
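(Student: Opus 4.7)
The plan is to reduce the claimed commutativity to a single identity on a single $1$-simplex, and then invoke multiplicativity. The left vertical projection is induced by the filtration $\{\H_0^{(i)}(\delta)\}_{i \geq 1}$ on $\H_0^{(1)}(\delta)$: there is a canonical inclusion $(\Lambda^\bullet \gr_1(\H_0(\delta))^*)^G \hookrightarrow C^\bullet_{CE}(\H_0^{(1)}(\delta))^G$ picking out cochains of leading filtration weight, and the square need only be checked on lifts lying in this image. Since both $\Phi$ (a wedge product of forms on $B$) and $\Phi_1$ (a cup product of cochains, by definition) are multiplicative, I may further assume $\zeta \in \gr_1(\H_0(\delta))^*$ is a single generator and reduce to the claim
\[\int_\gamma \Phi(\zeta) \;=\; \zeta(c_1(s)(\gamma))\]
for each $\gamma \in S_1(B)$.

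For this degree-one identity, I would work out both sides in terms of the infinitesimal variation of the family $\{s(b)\}_{b \in B}$ provided by the metric. By Theorem \ref{ThKMT}, $c(s)(\gamma) \in \QA^{(1)}(\hat{\delta}(\gamma_0))$ is the unique element making $s(\pt_1\gamma)$ agree with the parallel transport of $s(\pt_0\gamma)$ along $\gamma$; its leading weight $c_1(s)(\gamma)$ in $\gr_1(\QA^+(E))_{\gamma_0} \simeq \gr_1(\H_0(\delta))$ equals, via the identification $\exp : \gr_1(\H_0(\delta)) \xrightarrow{\sim} \gr_1(\QA^+(E))$ from Section \ref{der}, the path integral along $\gamma$ of the first-order variation of $b \mapsto s(b)$. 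The KMT construction assembles precisely this infinitesimal variation into a $1$-form on $B$ by pairing it with the chosen $\zeta \in \gr_1(\H_0(\delta))^*$. Integrating that $1$-form over $\gamma$ and comparing with the BCH expansion of $\log c(s)(\gamma)$ then yields the equality.

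The multiplicative extension from $p=1$ to general $p$ uses that $\Phi$ respects wedge products and that the de Rham map $\int$ intertwines the wedge product on forms with the cup product on simplicial cochains; on fully antisymmetric $G$-invariant arguments the Eilenberg--Zilber shuffle correction vanishes after symmetrization, so the higher-degree identity follows formally from the $p=1$ case. The main obstacle is the degree-one identity itself: making fully precise the matching between the leading BCH term of $\log c(s)(\gamma)$ and $\int_\gamma$ of the KMT $1$-form requires a careful analysis of parallel transport for the metric-determined family of Chen formal homology connections, together with an explicit presentation of the KMT characteristic map in terms of that family and of the exponential identification in Section \ref{der}.
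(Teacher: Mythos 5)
Your overall strategy (reduce to a degree-one identity on a single $1$-simplex, then extend multiplicatively) matches the shape of the paper's argument, but the proposal stops exactly where the mathematical content of the theorem lies: you name the identity $\int_\gamma\Phi(\zeta)=\zeta(c_1(s)(\gamma))$ as "the main obstacle" requiring "a careful analysis of parallel transport" and "an explicit presentation of the KMT characteristic map," and then do not supply either. That is a genuine gap, not a deferrable detail. The paper closes it with three specific ingredients you do not produce: (1) the classifying map $\mu:\tilde{B}\to Q(X,\delta)=\pi_0(Q_\bullet(X,\delta))$, $\mu([\gamma])=g_\gamma^{-1}\cdot[s(\gamma(1))]$, into the moduli space of $C_\infty$-models, defined on the universal cover so that fibers over different points of $B$ can be compared; (2) the right-invariant Maurer--Cartan form $\eta\in A^1(Q(X,\delta);\H_0^{(1)}(\delta))$ coming from the free transitive $\QA^+(\delta)$-action, whose pullback $\eta_\mu=\mu^*\eta$ is precisely the flat connection out of which the KMT map is built, $\Phi(\xi)=\xi(\eta_\mu,\dots,\eta_\mu)$ --- this is the "explicit presentation" you say is needed; and (3) the transport equation $d\Psi=\Psi\eta$ for $\Psi(\alpha)=\mu([l])^{-1}\alpha$, whose iterated-integral solution gives
\[c(s)(\gamma)=g_l\cdot\sum\int_{\tilde{\gamma}}\eta_\mu\cdots\eta_\mu,\]
so that the image in $\gr_1$ kills all terms of length $\geq 2$ and yields $c_1(s)(\gamma)=g_l\int_{\tilde{\gamma}}\eta_\mu$ on the nose. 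Your sketch instead appeals to "the BCH expansion of $\log c(s)(\gamma)$," but $c(s)(\gamma)$ is defined group-theoretically from the torsor structure on $\pi_0$, and without the iterated-integral identity there is no a priori link between its logarithm and any integral over $\gamma$; asserting that the leading term "equals the path integral of the first-order variation of $s$" is the conclusion, not an input.

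Two smaller points. First, you do not address why the comparison can be made at all when $\Pi_0$ and $\gr_1(\H_0^+(E))$ are nontrivial local systems; the paper handles this by working on $\tilde{B}$, conjugating by the holonomy $g_l$ along a path $l$ from the basepoint, and invoking $\pi_1(B,*)$-invariance at the end --- omitting this leaves the $p\geq 2$ cup products ill-defined. Second, your reduction to $p=1$ via "the de Rham map intertwines wedge with cup after symmetrization" is stated more strongly than what is true (the de Rham map is multiplicative only up to homotopy in general); the paper's version of this step is the specific identity $\int\xi(\eta_\mu,\dots,\eta_\mu)=\bar{\xi}\bigl(\int\eta_\mu,\dots,\int\eta_\mu\bigr)$ for the particular flat form $\eta_\mu$, which is what actually needs to be checked.
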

\begin{proof}
Take a base point $*$ of $B$ and put the universal covering of $B$
\[\tilde{B}=\{\gamma:[0,1]\to B;\gamma(0)=*\}/(\text{homotopy preserving boundary}).\]
We identify the fiber $E_*$ on $*$ with the typical fiber $X$. 

The smooth map $\mu:\tilde{B}\to Q(X,\delta)$ from the universal cover $\tilde{B}$ of $B$ to the moduli space $Q(X,\delta):=\pi_0(Q_\bullet(X,\delta))$ of $C_\infty$-algebra models of $X$ is defined by 
\[\mu([\gamma])=g_{\gamma}^{-1}\cdot [s(\gamma(1))].\]
Here $g_\gamma:E_*\to E_{\gamma(1)}$ is the holonomy along $\gamma$. The right-invariant Maurer-Cartan form 
\[\eta\in A^1(Q(X,\delta);\H_0^{(1)}(\delta)).\] 
is defined by the right-action of $\QA^+(\delta)$ on $Q(X,\delta)$. So we get the flat connection
\[\eta_\mu:=\mu^*\eta\in A^1(\tilde{B};\H_0^{(1)}(\delta)).\]
On the other hand, we can regard $s$ as the $0$-partial section $s:\sk_0(S_\bullet(B))\to \Q_\bullet(E)$. Its non-abelian obstruction cocycle is described by
\[c(s)(\gamma)=[s(\gamma(0))]^{-1}g_\gamma^{-1}[s(\gamma(1))]=g_l(\mu([l])^{-1}\mu([\gamma l])),\]
where $l$ is a path from $*$ to $\gamma(0)$ and a path $\tilde{\gamma}:[0,1]\to\tilde{B}$ is the lift of $\gamma$ such that $\tilde{\gamma}(0)=[l]$. The map $\Psi:Q(X,\delta)\to Q(X,\delta)$ defined by $\Psi(\alpha)=\mu([l])^{-1}\alpha$ satisfies the differential equation $d\Psi=\Psi \eta$. Thus, solving the equation over the path $\mu\tilde{\gamma}$, we have 
\[\mu([l])^{-1}\mu([\gamma l])=\Psi(\mu\tilde{\gamma}(1))=\sum\int_{\mu\tilde{\gamma}}\eta\cdots \eta.\]
Therefore we get the description using iterated integrals
\[c(s)(\gamma)=g_l\cdot\sum\int_{\mu\tilde{\gamma}}\eta\cdots \eta=g_l\cdot\sum\int_{\tilde{\gamma}}\eta_\mu\cdots \eta_\mu.\]
Its projection to $\gr_1(\H_0(\delta))$ is equal to $c_1(s)(\gamma)=g_l\int_{\tilde{\gamma}}\eta_\mu$ and 
\[\int \Phi(\xi)=\int \xi(\eta_\mu,\dots,\eta_\mu)=\bar{\xi}\left(\int\eta_\mu,\dots,\int\eta_\mu\right)=\Phi_1(\bar{\xi})\in C^p(\tilde{B})\]
for $\xi\in C_{CE}^p(\H_0^{(1)}(\delta))^G$, where $\bar{\xi}$ is the projection of $\xi$. Since the element is $\pi_1(B,*)$-invariant, we can regard it as element in $C^p(B)$.\end{proof}

Furthermore if $c_1(s)=\cdots =c_{i-1}(s)=0$, we get the (cocycle-level) characteristic map $\Phi_i:(\Lambda^\bullet\gr_i(\H_0(\delta))^*)^G\to C^\bullet(B;\r)$ defined using $c_i(s)$ instead of $c_1(s)$. Since $\eta_\mu\in A^1(B;\H_0^{(i)}(\delta))$, the commutative diagram similar to Theorem \ref{rel:KMT} exists. So the construction above using obstructions is the ``leading term" of the characteristic map obtained in \cite{KMT}
\[\Phi:C_{CE}^\bullet(\H_0^{(1)}(\delta))^G\to A^\bullet(B).\]

\subsection{Example of surface bundles}\label{Exsur}

We consider the case of $X=\Sigma_g$, which is the closed oriented surface with genus $g\geq2$. This is a K\"ahler manifold, so 
\[\delta=\omega \frac{\pt}{\pt v}\]
is a Chen differential. Here $v\in W_1$ is the fundamental form of $\Sigma_g$ and $\omega\in [W_0,W_0]$ is the intersection form, i.e., $\omega=\sum_{i=1}^g [x^i,y^i]$ for a symplectic basis $\{x^i,y^i\}$ of $W_0$ with respect to the intersection form of $\Sigma_g$.

\subsubsection{The first obstruction for surface bundles}\label{sur}
For a oriented surface bundle (with section), its homological structure group is in the symplectic group $\Sp(W_0)$ of $W_0$. 

\begin{prop}We have the identification as $\Sp(W_0)$-vector space
\[\H_0^1(\delta)\simeq \Lambda^3W_0.\]
\end{prop}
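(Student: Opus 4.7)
The plan is to compute $\H_0^1(\delta)$ directly from the definition of $(\Der(\ct), \ad \delta)$. Since $\delta \in \Der^1(\ct)_{-1}$, the operator $\ad \delta$ raises weight by $1$ and lowers homological degree by $1$, so
\[
\H_0^1(\delta) = \frac{\ker\bigl(\ad\delta \colon \Der^1(\ct)_0 \to \Der^2(\ct)_{-1}\bigr)}{\Ima\bigl(\ad\delta \colon \Der^0(\ct)_1 \to \Der^1(\ct)_0\bigr)}.
\]
Using $W = W_0 \oplus W_1$ with $W_1 = \r v$ concentrated in homological degree $1$, one identifies
\[
\Der^0(\ct)_1 \cong W_0^*,\quad \Der^1(\ct)_0 \cong \Hom(W_0, \Lambda^2 W_0) \oplus W_0,\quad \Der^2(\ct)_{-1} \cong L^3 W_0,
\]
where the middle identification sends $D$ to the pair $(D|_{W_0}, \eta)$ with $D(v) = [\eta, v]$, and the last sends $D$ to $D(v)$.

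Next I compute the two differentials from $\delta(v) = \omega$ and $\delta|_{W_0} = 0$. The first differential sends $f \in W_0^*$ to $(A_f, \eta_f)$, where $A_f(x) = f(x)\omega$ and $\eta_f$ is the symplectic dual of $f$; this map is injective. The second differential sends $(A, \eta)$ to $[\eta, \omega] - A(\omega) \in L^3 W_0$, where $A$ is extended as a derivation of $L W_0$ and $A(\omega) := \sum_i\bigl([A(x^i), y^i] + [x^i, A(y^i)]\bigr)$.

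I then define the $\Sp(W_0)$-equivariant map $\Phi \colon \Der^1(\ct)_0 \to \Lambda^3 W_0$ by
\[
\Phi(A, \eta) := \pi(\tilde A) - \omega \wedge \eta,
\]
where $\tilde A \in W_0 \otimes \Lambda^2 W_0$ is the transport of $A \in W_0^* \otimes \Lambda^2 W_0$ along the symplectic isomorphism $W_0^* \cong W_0$, and $\pi \colon W_0 \otimes \Lambda^2 W_0 \to \Lambda^3 W_0$ is the wedge product. A short calculation shows $\Phi \circ \ad\delta = 0$ on $\Der^0(\ct)_1$, so $\Phi$ descends to $\bar\Phi \colon \H_0^1(\delta) \to \Lambda^3 W_0$. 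Surjectivity is obtained by producing, for each $a \wedge b \wedge c \in \Lambda^3 W_0$, an explicit cocycle lifting it (take $\eta = 0$ and $A$ the Jacobi-symmetric lift).

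The main obstacle is injectivity of $\bar\Phi$: a cocycle $(A, \eta)$ with $\pi(\tilde A) = \omega \wedge \eta$ must be identified as $\ad\delta(f)$ for some unique $f \in W_0^*$. I plan to handle this using the $\Sp(W_0)$-decomposition $W_0 \otimes \Lambda^2 W_0 = \Lambda^3 W_0 \oplus S^{2,1} W_0$, together with the embedding $L^3 W_0 \hookrightarrow W_0^{\otimes 3}$, which forces the $S^{2,1}$-component of $\tilde A$ to be uniquely determined by $\eta$; subtracting the coboundary dictated by this relation makes both components vanish. Equivariance of $\Phi$ under $\Sp(W_0)$ is automatic, since its construction uses only the symplectic form and the wedge product.
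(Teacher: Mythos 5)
Your proof is correct, and it takes a genuinely different route from the paper's. Both arguments start identically: you identify $\Der^0(\ct)_1\cong W_0^*$, $\Der^1(\ct)_0\cong\Hom(W_0,\Lambda^2W_0)\oplus W_0$ and $\Der^2(\ct)_{-1}\cong L^3W_0$, and compute the two differentials from $\delta v=\omega$ --- this is exactly the paper's decomposition $D=D_0+[v,z]\frac{\pt}{\pt v}$ together with its formulas for $[\delta,D]$ and $[\delta,P]$. The paths then diverge. The paper uses the coboundaries to normalize away the $D(v)$-component, identifies the resulting quotient with $\Der^1(LW_0/(\omega))$, and cites Morita for $\Der^1(LW_0/(\omega))\simeq\Lambda^3W_0$. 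You instead make that last identification self-contained: the contraction $\Phi(A,\eta)=\pi(\tilde A)-\omega\wedge\eta$, the Jacobi-identity description of $\ker\bigl(W_0\otimes\Lambda^2W_0\to L^3W_0\bigr)$ as the image of $\Lambda^3W_0$, and the decomposition $W_0\otimes\Lambda^2W_0=\Lambda^3W_0\oplus S^{2,1}W_0$ (with $\pi$ injective on the first summand and zero on the second) give surjectivity and injectivity directly. So your argument essentially reproves the fact the paper imports from Morita; it buys self-containedness at the cost of a page of equivariant linear algebra. Two points to nail down in the final write-up: (i) the sign in $\Phi$ must be chosen consistently with your conventions for $D(v)=[\eta,v]$ and for the symplectic dual of $f$, since with the opposite convention the computation on a coboundary yields $(1\mp1)\,\omega\wedge\tilde f$ and $\Phi$ would fail to descend; (ii) the injectivity step is cleanest as a dimension count --- for a cocycle with $\Phi=0$, the cocycle condition fixes the $S^{2,1}$-component of $\tilde A$ in terms of $\eta$ and $\Phi=0$ fixes the $\Lambda^3$-component, so such cocycles form a space of dimension at most $\dim W_0$, which must therefore coincide with the $\dim W_0$-dimensional space of coboundaries you have already shown it contains.
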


\begin{proof}
An element $D\in \Der^1(LW)_0$ is described by the form
\[D=D_0+[v,z]\frac{\pt}{\pt v}\]
for $D_0\in \Der^1(LW_0)$ and $z\in W_0$. Then we can calculate the image by $\ad(\delta)$:
\[[\delta,D]=-D_0(\omega)+[\omega,z]\frac{\pt}{\pt v}.\]
So, $D$ is in the kernel if and only if $D_0(\omega)\in (\omega)$, where $(\omega)$ is the Lie ideal in $LW_0$ generated by $\omega$. This condition is equivalent to the condition: $D_0$ induces a derivation on $LW_0/(\omega)$

On the other hand, an element $P\in \Der^0(\ct)_1$ is described by
\[P=\sum b_iv\frac{\pt}{\pt x_i}\]
for $b_i\in \r$, where $\{x_i\}_{i=1}^{2g}$ is a basis of $W_0$. Its image of $\ad(\delta)$ is
\[[\delta,P]=\sum b_i\omega\frac{\pt}{\pt x_i}-P(\omega)\frac{\pt}{\pt v}.\]

Since we can prove $[v,W_0]=\{P(\omega);P\in \Der^0(\ct)_1\}$ by direct calculus, for any $D\in \Der^1(\ct)_0$, there exists $P\in \Der^0(\ct)_1$ such that 
\[D_P:=D+[\delta,P]\in \Der^1(LW_0).\]
Furthermore for another $P'\in \Der^0(\ct)_1$ such that $D_{P'}=D+[\delta,P']\in \Der^1(LW_0)$, their difference $[\delta,P-P']$ is in $\Hom(W_0,\r\omega)\subset \Der^1(LW_0)$. So if $D$ is in the kernel, $D_P$ and $D_{P'}$ induce the same derivation on $LW_0/(\omega)$. Therefore we get the isomorphism
\[\H_0^1(\delta)\simeq \Der^1(LW_0/(\omega)).\]
According to \cite{Mor2}, we have the isomorphism $\Der^1(LW_0/(\omega))\simeq \Lambda^3W_0$.
\end{proof}

By the proposition above, for a oriented surface bundle $E\to B$ with section, we get the obstruction class
\[\o^{(1)}=\o^{(1)}(\Q(E,\hat{\delta}))\in H^1(B;\Lambda^3W_0(E)).\]
Here $\Lambda^3W_0(E)$ is the local system of vector spaces such that the space on $b\in B$ is
\[\Lambda^3W_0(E)_b=\Lambda^3\tilde{H}_1(E_b;\r)[-1].\]
This local system is defined in the same way as $\QA^+(E)$ and $\H_0(E)$. Then we also get the characteristic map
\[(\Lambda^\bullet(\Lambda^3W_0)^*)^{\Sp(W_0)}\to H^\bullet (B;\r).\]

\subsubsection{Twisted Morita-Miller-Mumford class}
Let $\M_{g,*}$ be the mapping class group of $\Sigma_g$ fixing a base point. The action of the diffeomorphism group on $Q_0(\Sigma_g,\delta)$ induces the action of $\M_{g,*}$ on $Q(\Sigma_g,\delta)=\pi_0(Q_\bullet(\Sigma_g,\delta))$.

We shall show that the obstruction $\o^{(1)}$ of a surface bundle can be regarded as the 1st twisted Morita-Miller-Mumford class
\[m_{0,3}=[\tau_1]\in H^1(\M_{g,*};\Lambda^3W_0).\]
Here the cross homomorphism $\tau_1:\M_{g,*}\to \Lambda^3W_0$ is the 1st Johnson map. This map is defined as follows: for $f\in Q(\Sigma_g,\delta)$, the total Johnson map $\tau^f:\M_{g,*}\to \QA^{(1)}(\delta)$ is given by the equation
\[f\circ \tau^f(\varphi)=\varphi\cdot f\]
for $\varphi\in \M_{g,*}$. By composing the quotient map \[\QA^{(1)}(\delta)\to \gr_1(\QA(\delta))=\H_0^1(\delta)=\Lambda^3W_0,\] we get $\tau_1:\M_{g,*}\to \Lambda^3W_0$. The map is independent of a choice of $f$. 

For any $\Sigma_g$-bundle $E\to B$, denote the 1st twisted Morita-Miller-Mumford class of $E\to B$ by 
\[m_{0,3}(E):=\rho^*m_{0,3}\in H^1(\pi_1(B,b);\Lambda^3W_0)\simeq H^1(B;\Lambda^3 W_0(E)),\]
where $\rho:\pi_1(B,b)\to \M_{g,*}$ is the holonomy representation of the surface bundle $E\to B$. Note that $\pi_1(B,b)$ acts on $\Lambda^3W_0$ through $\rho$.
\begin{thm}
The obstruction class of a surface bundle $E\to B$ is equal to the minus of the 1st twisted Morita-Miller-Mumford class of $E\to B$:
\[\o^{(1)}(E)=-m_{0,3}(E)\in H^1(B;\Lambda^3W_0(E)).\]
\end{thm}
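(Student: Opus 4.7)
The plan is to represent both classes as group cocycles on $\pi_1(B,b)$ and match them term by term. Using that $\Lambda^3 W_0(E)$ is the local system of $\r$-vector spaces attached to the monodromy $\rho:\pi_1(B,b)\to\M_{g,*}$, I would invoke $H^1(B;\Lambda^3 W_0(E))\simeq H^1(\pi_1(B,b);\Lambda^3 W_0)$ to place both $\o^{(1)}(E)$ and $m_{0,3}(E)$ into the same group; it then suffices to compare representing cocycles.

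First I would construct a convenient $0$-partial section $s:\sk_0 S_\bullet(B)\to \Q_\bullet(E,\hat{\delta})$. Fix a trivialization $\varphi_b:E_b\simeq\Sigma_g$ and let $f\in Q(\Sigma_g,\delta)=\pi_0 Q_\bullet(\Sigma_g,\delta)$ be a chosen base point. For every vertex $b'\in S_0(B)$ pick a smooth path $\ell_{b'}$ from $b$ to $b'$ and define $s(b')$ to be the class obtained by transporting $f$ along $\ell_{b'}$ via the holonomy of $E$. The ambiguity in the path choice disappears once we pass to the cohomology class $\o^{(1)}(E)$.

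The heart of the argument is then the cocycle computation. For a loop $\gamma$ at $b$, the defining identity $[s(\pt_1\gamma)]\, c(s)(\gamma)=\Pi_0(\gamma)^{-1}[s(\pt_0\gamma)]$ unfolds through the trivialization into an identity of the form $f\cdot c(s)(\gamma)=g_\gamma^{\ast}f$ in $Q(\Sigma_g,\delta)$, where $g_\gamma$ is a diffeomorphism representing $\rho(\gamma)\in\M_{g,*}$. Comparing with the defining equation $f\circ \tau^f(\varphi)=\varphi\cdot f$ of the total Johnson map identifies $c(s)(\gamma)$ with $\tau^f(\rho(\gamma))^{\pm 1}$ in $\QA^{(1)}(\delta)$. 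Projecting to $\gr_1(\QA(\delta))\simeq \H_0^1(\delta)\simeq\Lambda^3 W_0$ via the exponential map, under which group inversion becomes negation on the associated graded, yields $c_1(s)(\gamma)=\pm\tau_1(\rho(\gamma))$, and hence $\o^{(1)}(E)=\pm\rho^{\ast}[\tau_1]=\pm m_{0,3}(E)$.

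The main obstacle is pinning down the global sign. Four conventions interact at once: the face assignment $\pt_0$ versus $\pt_1$ for a $1$-simplex, the appearance of $\Pi_0(\gamma)^{-1}$ rather than $\Pi_0(\gamma)$ in the definition of the obstruction cocycle, the pullback-versus-pushforward convention under which $\Diff_+(\Sigma_g,\ast)$ acts on formal homology connections, and the left-versus-right action convention implicit in $f\circ \tau^f(\varphi)=\varphi\cdot f$. I would resolve this by computing both sides explicitly on a small example (a bundle with simple cyclic monodromy on $H_1(\Sigma_g)$ is enough), reading off the correct sign there, and then checking that the general argument above specialises consistently. Once the sign is pinned down to be $-1$, the identity $\o^{(1)}(E)=-m_{0,3}(E)$ in $H^1(B;\Lambda^3 W_0(E))$ follows.
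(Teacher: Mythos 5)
Your overall route is the one the paper takes: realize $\o^{(1)}(E)$ as a group cocycle on $\pi_1(B,b)$, unwind the defining identity $[s(\pt_1\gamma)]\,c(s)(\gamma)=\Pi_0(\gamma)^{-1}[s(\pt_0\gamma)]$ against the defining identity $f\circ\tau^f(\varphi)=\varphi\cdot f$ of the total Johnson map, and project to $\gr_1(\QA(\delta))\simeq\Lambda^3W_0$. The genuine gap is exactly the point you flag yourself: you stop at $c_1(s)(\gamma)=\pm\tau_1(\rho(\gamma))$ and propose to fix the sign by computing a small example. Since the content of the statement is precisely that sign (the unsigned assertion is strictly weaker), the proof as written is incomplete; and the example strategy is delicate, because you would need a bundle on which both $\o^{(1)}$ and $m_{0,3}$ are provably nonzero and independently computable, which is itself a nontrivial task, together with an argument that the sign so obtained is universal.

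The paper resolves the sign without any example, by choosing a specific $0$-partial section: a fiberwise metric gives the Hodge-theoretic section $s$, and the proof of Theorem \ref{rel:KMT} already supplies the closed formula $c(s)(\gamma)=\sum\int_{\tilde{\gamma}}\eta_\mu\cdots\eta_\mu=\tau^{\mu(b)}(\rho(\gamma))^{-1}$ for a loop $\gamma$ at $b$; the inverse is forced by the convention $c(s)(\gamma)=[s(\gamma(0))]^{-1}g_\gamma^{-1}[s(\gamma(1))]$. Passing to $\gr_1$ of a filtered group turns inversion into negation, so $c_1(s)(\gamma)=-\tau_1^{\mu(b)}(\rho(\gamma))$ on the nose, whence $\o^{(1)}(E)=-\rho^*m_{0,3}$. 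To complete your argument, replace the appeal to an example by this definitional unwinding (or simply cite the cocycle formula from Theorem \ref{rel:KMT}): your path-transport section is fine for the unsigned identification, but the metric section is what makes the iterated-integral formula, and with it the sign, directly available.
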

\begin{proof}
The following discussion is also used in \cite{Kaw2} and Section 4 of \cite{MT}.

Fix a metric of a $\Sigma_g$-bundle $E\to B$. This metric $\mu$ of $E\to B$ gives a section $s$ of $\Pi_0$. This defines the obstruction class
\[\o(E)=[c(s)]\in H^1(B;\QA^{(1)}(E)).\]
According to the proof of Theorem \ref{rel:KMT}, we have
\[c(s)(\gamma)=\sum\int_{\tilde{\gamma}}\eta_\mu\cdots \eta_\mu=\tau^{\mu(b)}(\rho(\gamma))^{-1}\in \QA^{(1)}(E)_b.\]
for a loop $\gamma$ with a base point $b\in B$. Here $\tilde{\gamma},\mu,\eta_\mu$ are as in the proof of Theorem \ref{rel:KMT}. The equation in $\gr_1(\QA(E))_b=\gr_1(\QA(\delta))\simeq \Lambda^3W_0$ implies
\[c_1(s)(\gamma)=-\tau_1^{\mu(b)}(\rho(\gamma)).\]
So we get $\o^{(1)}(E)=-\rho^*m_{0,3}\in H^1(\pi_1(B,b);\Lambda^3W_0)$. 
\end{proof}

So the obtained characteristic map \[\Lambda^\bullet(\Lambda^3W_0^*)^{\Sp(W_0)}\to H^\bullet(B;\r)\] gives Morita-Miller-Mumford classes of $E\to B$ by the result of \cite{KM}.

\end{document}